\newdimen\bibspace
\numberwithin{equation}{section}
\newtheorem{theorem}{Theorem}[section]
\newtheorem{lemma}[theorem]{Lemma}
\newtheorem{corollary}[theorem]{Corollary}
\newtheorem{remark}[theorem]{Remark}
\def\XXint#1#2#3{{\setbox0=\hbox{$#1{#2#3}{\int}$ }
\vcenter{\hbox{$#2#3$ }}\kern-.6\wd0}}
\begin{document}

\title{Asymptotic expansion at infinity of solutions of Monge-Amp\`ere type equations}

\author{Zixiao Liu,\quad Jiguang Bao\footnote{Supported in part by Natural Science Foundation of China (11871102 and 11631002).}}
\date{\today}

\maketitle

\begin{abstract}
We obtain a quantitative expansion at infinity of solutions for a kind of Monge-Amp\`ere type equations that origin from mean curvature equations of Lagrangian graph $(x,Du(x))$ and refine the previous study on zero mean curvature equations and the Monge-Amp\`ere equations.

{\textbf{Keywords:}} Monge-Amp\`ere equation,  Mean curvature eqution, Asymptotic expansion.

 {\textbf{MSC~2020:}}~~ 35J60;~~35B40
\end{abstract}

\section{Introduction}

In 2018, Wang-Huang-Bao \cite{Wang.Chong-paper} studied the second boundary value problem of Lagrangian mean curvature equation of gradient graph $(x,Du(x))$ in $\left(\mathbb{R}^{n} \times \mathbb{R}^{n}, g_{\tau}\right)$, where $Du$ denotes the gradient of scalar function $u$ and
\begin{equation*}
g_{\tau}=\sin \tau \delta_{0}+\cos \tau g_{0}, \quad \tau \in\left[0, \frac{\pi}{2}\right]
\end{equation*}
is the linearly combined metric of standard Euclidean metric
\begin{equation*}
\delta_{0}=\sum_{i=1}^{n} d x_{i} \otimes d x_{i}+\sum_{j=1}^{n} d y_{j} \otimes d y_{j},
\end{equation*}
with the pseudo-Euclidean metric
\begin{equation*}
g_{0}=\sum_{i=1}^{n} d x_{i} \otimes d y_{i}+ \sum_{j=1}^{n} d y_{j} \otimes d x_{j}.
\end{equation*}
They proved that for domain $\Omega\subset\mathbb R^n$, if $u\in C^2(\Omega)$ is a solution of
\begin{equation}\label{Equ-perturb}
  F_{\tau}\left(\lambda\left(D^{2} u\right)\right)=f(x), \quad x \in \Omega,
\end{equation}
then $Df(x)$ is the mean curvature of gradient graph $(x,Du(x))$ in $\left(\mathbb{R}^{n} \times \mathbb{R}^{n}, g_{\tau}\right)$. Previously,  Warren \cite{Warren} proved that when $f(x)\equiv C_0$ for some constants $C_0$, the mean curvature of $(x,Du(x))$ is zero. In \eqref{Equ-perturb},
$f(x)$ is a scalar function with sufficient regularity, $\lambda\left(D^{2} u\right)=\left(\lambda_{1}, \lambda_{2}, \cdots, \lambda_{n}\right)$ are $n$ eigenvalues of Hessian matrix $D^{2} u$ and
$$
F_{\tau}(\lambda):=\left\{
\begin{array}{ccc}
\displaystyle  \frac{1}{n} \sum_{i=1}^{n} \ln \lambda_{i}, & \tau=0,\\
\displaystyle  \frac{\sqrt{a^{2}+1}}{2 b} \sum_{i=1}^{n} \ln \frac{\lambda_{i}+a-b}{\lambda_{i}+a+b},
  & 0<\tau<\frac{\pi}{4},\\
  \displaystyle-\sqrt{2} \sum_{i=1}^{n} \frac{1}{1+\lambda_{i}}, & \tau=\frac{\pi}{4},\\
  \displaystyle\frac{\sqrt{a^{2}+1}}{b} \sum_{i=1}^{n} \arctan \displaystyle\frac{\lambda_{i}+a-b}{\lambda_{i}+a+b}, &
  \frac{\pi}{4}<\tau<\frac{\pi}{2},\\
  \displaystyle\sum_{i=1}^{n} \arctan \lambda_{i}, & \tau=\frac{\pi}{2},\\
\end{array}
\right.
$$
$a=\cot \tau, b=\sqrt{\left|\cot ^{2} \tau-1\right|}$.

If $\tau=0$, then \eqref{Equ-perturb} becomes the Monge-Amp\`ere type equation
\begin{equation}\label{equ-MA}
\operatorname{det} D^{2} u=e^{nf(x)}\quad\text{in }\mathbb R^n.
\end{equation}
For $f(x)$ being a constant $C_0$, there are Bernstein-type results by J\"orgens \cite{Jorgens}, Calabi \cite{Calabi} and Pogorelov \cite{Pogorelov}, which state that any convex classical solution of \eqref{equ-MA}  must be a quadratic polynomial. See Cheng-Yau \cite{ChengandYau}, Caffarelli \cite{7}, Jost-Xin \cite{JostandXin} and Li-Xu-Simon-Jia \cite{AffineMongeAmpere} for different proofs and extensions. For $f(x)-C_0$ having compact support, there are exterior Bernstein-type results by Ferrer-Mart\'{\i}nez-Mil\'{a}n \cite{FMM99} for $n=2$ and Caffarelli-Li \cite{CL}, which state that any convex solution must be asymptotic to quadratic polynomials at infinity (for $n=2$ we need additional $\ln$-term). For $f(x)-C_0$ vanishing at infinity, there are similar asymptotic results by Bao-Li-Zhang \cite{BLZ}. For $f(x)-C_0$ being a periodic function or asymptotically periodic function, there are classification results by Caffarelli-Li \cite{Peroidic_MA}, Teixeira-Zhang \cite{Peroidic_MA2}  etc.

If $\tau=\frac{\pi}{2}$, then \eqref{Equ-perturb} becomes the  Lagrangian mean curvature equation
\begin{equation}\label{equ-spl}
\sum_{i=1}^{n} \arctan \lambda_{i}\left(D^{2} u\right)=f(x)\quad\text{in }\mathbb R^n.
\end{equation}
For $f(x)$ being a constant $C_0$, there are Bernstein-type results by Yuan \cite{Yu.Yuan1,Yu.Yuan2}, which state that any classical solution of \eqref{equ-spl}  and
\begin{equation}\label{equ-cond-spl}
   D^2u\geq \left\{
  \begin{array}{lll}
    -KI, & n\leq 4,\\
    -(\frac{1}{\sqrt 3}+\epsilon(n))I, & n\geq 5,\\
  \end{array}
  \right.\quad\text{or}\quad C_0>\frac{n-2}{2}\pi,
\end{equation}
must be a quadratic polynomial, where  $I$ denote the unit $n\times n$ matrix, $K$ is a constant and $\epsilon(n)$ is a small dimensional constant.
For $f(x)-C_0$ having compact support, there is an exterior Bernstein-type result by Li-Li-Yuan \cite{ExteriorLiouville}, which states that any classical solution of \eqref{equ-spl} with \eqref{equ-cond-spl}  must be asymptotic to quadratic polynomials at infinity (for $n=2$ we need additional $\ln$-term).

For general $\tau\in [0,\frac{\pi}{2}]$, for $f(x)$ being a constant $C_0$, there are Bernstein-type results under suitable semi-convex conditions by Warren \cite{Warren}, which is based on the results of J\"orgens \cite{Jorgens}-Calabi \cite{Calabi}-Pogorelov \cite{Pogorelov}, Flanders \cite{Flanders} and Yuan \cite{Yu.Yuan1,Yu.Yuan2}. For $f(x)-C_0$ having compact support, there are exterior Bernstein-type results when $n\geq 3$ in our earlier work \cite{bao-liu-2020}, which state that any classical solution of \eqref{Equ-perturb} with suitable semi-convex conditions must be asymptotic to quadratic polynomial at infinity. There are also  higher order expansions at infinity, which give the precise gap between exterior maximal/minimal gradient graph and the entire case. Such higher order expansions
problem was considered for the Yamabe equation and $\sigma_k$-Yamabe equation by Han-Li-Li \cite{Han2019-Expansion}, which refines the study by Caffarelli-Gidas-Spruck \cite{CGS}, Korevaar-Mazzeo-Pacard-Schoen \cite{KMPS},  Han-Li-Teixeira \cite{Han-Li-T-Simgak} etc.

In this paper, we obtain asymptotic expansion at infinity of classical solutions of
\begin{equation}\label{Equ-exterior}
  F_{\tau}(\lambda(D^2u))=f(x)\quad\text{in }\mathbb R^n,
\end{equation}
where $n\geq 3$, $\tau\in [0,\frac{\pi}{4}]$ and $f(x)$ is a perturbation of $f(\infty):=\displaystyle\lim _{x \rightarrow \infty} f(x)$ at infinity. This partially refines previous study \cite{BLZ,CL,RemarkMA-2020,ExteriorLiouville,bao-liu-2020} etc.

Our first result considers asymptotic behavior and higher order expansions of general classical solution of \eqref{Equ-exterior}.  Hereinafter, we let $\varphi=O_m(|x|^{-k_1}(\ln|x|)^{k_2})$ with $m\in\mathbb N, k_1,k_2\geq 0$ denote $$|D^k\varphi|=O(|x|^{-k_1-k}(\ln|x|)^{k_2})
\quad\text{as}~|x|\rightarrow+\infty
$$ for all $0\leq k\leq m$. Let $x^T$ denote the transpose of vector $x\in\mathbb R^n$,  $\mathtt{Sym}(n)$ denote the set of symmetric $n\times n$ matrix,
$\mathcal H_k^n$ denote the $k$-order spherical harmonic function space in $\mathbb R^n$,
$DF_{\tau}(\lambda(A))$ denote the matrix with elements being value of partial derivative of $F_{\tau}(\lambda(M))$ w.r.t $M_{ij}$ variable at matrix $A$ and $[k]$ denote the largest natural number no larger than $k$.
\begin{theorem}\label{Thm-firstExpansion}
  Let $u \in C^{2}\left(\mathbb{R}^{n}\right)$ be a classical solution of \eqref{Equ-exterior}, where
  $f\in C^0(\mathbb{R}^n)
  $ is $C^m$ outside a compact subset of $\mathbb{R}^n$ and satisfies \begin{equation}\label{Low-Regular-Condition}
  \limsup _ { | x | \rightarrow \infty } | x | ^ {\zeta+k} | D^k( f ( x ) - f ( \infty ) ) | < \infty,\quad\forall~ k=0,1,2,\cdots,m
  \end{equation}
  for some $\zeta>2$ and $m\geq 2$.
  Suppose either of the following holds
  \begin{enumerate}[(1)]
    \item \label{case-MA} $D^2u>0$ for $\tau=0$;
    \item \label{case-small}
\begin{equation}\label{Condition-QuadraticGrowth-1}
u(x)\leq C(1+|x|^2)\quad\text{and}\quad D^2u>(-a+b)I,\quad\forall~ x\in\mathbb{R}^n
\end{equation}
for some constant $C$, for $\tau\in (0,\frac{\pi}{4})$;
\item \label{case-inverse}
\begin{equation}\label{Condition-QuadraticGrowth-2}
u(x)\leq C(1+|x|^2)\quad\text{and}\quad D^2u>-I,\quad \forall~ x\in\mathbb{R}^n
\end{equation}
for some constant $C$, for $\tau=\frac{\pi}{4}$.
  \end{enumerate}
Then there exist $c\in\mathbb R, b\in\mathbb R^n$ and $ A\in\mathtt{Sym}(n)$ with $F_{\tau}(\lambda(A))=f(\infty)$ such that
  \begin{equation}\label{equ-asym-Behavior}
 u(x)-\left(\frac{1}{2} x^T A x+b x+c\right)=\left\{
  \begin{array}{llll}
    O_{m+1}(|x|^{2-\min\{n,\zeta\}}), & \zeta\not=n,\\
    O_{m+1}(|x|^{2-n}(\ln|x|)), & \zeta=n,\\
  \end{array}
  \right.
\end{equation}
as $|x|\rightarrow+\infty$.
\end{theorem}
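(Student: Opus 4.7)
The plan is to upgrade the zeroth-order asymptotic expansion (quadratic polynomial $+\, o(|x|^2)$) already available in the literature into the sharp, differentiable expansion \eqref{equ-asym-Behavior} by linearizing the fully nonlinear equation around its limiting quadratic and iterating a Newtonian potential estimate. Under each of the hypotheses (\ref{case-MA})--(\ref{case-inverse}), our earlier work \cite{bao-liu-2020} together with \cite{CL,BLZ,ExteriorLiouville} supplies $c\in\mathbb{R}$, $b\in\mathbb{R}^n$, and a symmetric matrix $A$ with $F_\tau(\lambda(A))=f(\infty)$ such that $u-Q=o(|x|^2)$ with some small quantitative rate, where $Q(x):=\tfrac12 x^T A x+b\cdot x+c$. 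The prescribed semi-convexity forces $A$ into the region where $DF_\tau(\lambda(A))$ is positive definite, so the equation is uniformly elliptic along the asymptote and interior estimates upgrade the $C^0$ decay to $C^{m+1}$ decay on dyadic annuli.

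Setting $v:=u-Q$ and using $F_\tau(\lambda(A))=f(\infty)$, the fundamental theorem of calculus gives
\begin{equation*}
a^{ij}(x)\,\partial_{ij}v=f(x)-f(\infty),\qquad a^{ij}(x):=\int_0^1\partial_{M_{ij}}F_\tau\bigl(\lambda(A+sD^2v(x))\bigr)\,ds,
\end{equation*}
and an affine change of coordinates diagonalizing $DF_\tau(\lambda(A))$ normalizes the leading constant part of $a^{ij}$ to $\delta^{ij}$. The equation rewrites as
\begin{equation*}
\Delta v=\bigl(f-f(\infty)\bigr)+\bigl(\delta^{ij}-a^{ij}(x)\bigr)\partial_{ij}v,
\end{equation*}
where the coefficient error satisfies $|\delta^{ij}-a^{ij}(x)|=O(|D^2v(x)|)$ by smoothness of $F_\tau$ at $A$. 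I would then iterate: if at some stage $v=O_{m+1}(|x|^{2-\alpha})$, the right-hand side decays like $|x|^{-\min\{\zeta,2\alpha\}}$, and convolution with the Newtonian potential of $\mathbb{R}^n$---modulo a harmonic adjustment absorbed into $b$ and $c$, since $A$ is pinned by the Hessian at infinity---upgrades $v$ to rate $|x|^{2-\min\{n,\zeta,2\alpha\}}$, with the critical logarithmic factor arising exactly when $\zeta=n$. Iterating $\alpha_{k+1}=\min\{n,\zeta,2\alpha_k\}$ saturates in finitely many steps at $\min\{n,\zeta\}$, and interior Schauder estimates on dyadic annuli propagate the decay to all derivatives up to order $m+1$, producing the claimed $O_{m+1}$ statement.

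The delicate point will be ensuring at each iteration step that the nonlinear correction $(\delta^{ij}-a^{ij})\partial_{ij}v$---which is essentially quadratic in $D^2v$ and hence decays at twice the rate of $v$---remains strictly subdominant to the forcing $f-f(\infty)$, so that it does not corrupt the sharp rate $\min\{n,\zeta\}$ or stall the bootstrap at the self-similar rate where coefficient error and forcing balance. The borderline case $\zeta=n$ also requires separate bookkeeping to extract a single $\ln|x|$ factor (rather than powers of $\ln|x|$) from the non-absolute convergence of the Newtonian convolution against $|x|^{-n}$, and one must verify that the harmonic adjustments absorbed into $b,c$ at each step do not reintroduce logarithmic pollution into lower-order terms.
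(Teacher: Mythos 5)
Your proposal takes a genuinely different route from the paper's. The paper does not iterate at all: it first invokes Theorem~\ref{thm-sec2} to obtain $\|D^2u\|_{C^\alpha(\mathbb R^n)}<\infty$ and $|D^2u-A|=O(|x|^{-\epsilon})$ (proved in Section~\ref{sec-convergeHessian} by a Caffarelli--Li level-set argument for $\tau=0$ and a Legendre-transform reduction for $\tau\in(0,\pi/4]$). It then uses the \emph{concavity} of $F_\tau$ to write each directional second derivative $v_{ee}$ as a subsolution of a linear inequality $\widehat{a_{ij}}D_{ij}v_{ee}\ge f_{ee}$ and compares with an explicit barrier $\sim|x|^{2-\min\{n,\zeta+2\}}$, obtaining the sharp Hessian decay in a single step. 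The constants $b,c$ are then extracted by applying an asymptotic Liouville theorem (Theorem~\ref{exteriorLiouville}) successively to $v_e$ and to $\bar v=v-bx$; that theorem is proved via the Green's function \emph{comparability} of the variable-coefficient operator with the Laplacian (the criterion of \cite{Equivalence}), not by the Laplacian's Newtonian potential, and crucially always sees a right-hand side decaying strictly faster than $|x|^{-2}$. Your route---normalize $DF_\tau(\lambda(A))$ to $\delta^{ij}$, treat $(\delta^{ij}-a^{ij})\partial_{ij}v$ as a quadratic forcing term, and double the rate via $\alpha_{k+1}=\min\{n,\zeta,2\alpha_k\}$---is a legitimate alternative, with the spherical-harmonic decomposition of the residual harmonic function playing the role of the Liouville theorem.

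Two of the points you flag as ``delicate'' are, as written, genuine gaps. First, the starting rate $\alpha_0$ must be a strictly positive power of $|x|$, and for $\tau\in(0,\pi/4]$ this is not freely available from the cited literature: it is precisely the content of the paper's Theorem~\ref{thm-sec2}, whose proof for $\tau>0$ requires the Legendre transform to pass to a Monge--Amp\`ere or Poisson equation, a strip argument to show invertibility, and a transfer of $C^\alpha$ estimates back through the Legendre map. Second, in the early stages of your iteration one generically has $2\alpha_k<2$, and then the global Newtonian convolution against a source decaying like $|x|^{-2\alpha_k}$ diverges at infinity; you would need an annular or renormalized potential (or a barrier as the paper uses) to produce a particular solution with controlled decay at those stages, and to verify that its Hessian decays fast enough that the residual harmonic part still has vanishing Hessian. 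The paper sidesteps both issues: the concavity/barrier step jumps directly to $|D^2v|=O(|x|^{2-\min\{n,\zeta+2\}})$, so the Liouville theorem always receives a source decaying faster than $|x|^{-2}$. By contrast, your concern about logarithmic pollution from the harmonic adjustments is not a real obstruction: the particular solution can be fixed once and for all, the bootstrap only sharpens its decay \emph{estimate}, and the coefficients $b,c$ are pinned at the first pass in which $D^2(v-w)\to 0$.
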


\begin{remark}
The matrix $A$ in Theorem \ref{Thm-firstExpansion}  also satisfies $A> 0$ in case \eqref{case-MA}, $A> (-a+b)I$ in case \eqref{case-small} and $A> -I$ in case \eqref{case-inverse} respectively.
\end{remark}

\begin{remark}\label{example}
Notice that in condition \eqref{Low-Regular-Condition}, we only require $m\geq 2$, which is an improvement to the results for  $m\geq 3$ by Bao-Li-Zhang \cite{BLZ}. It would be an interesting to determin sharp lower bounds for $m$ in Theorem \ref{Thm-firstExpansion}. There has been an example in \cite{BLZ} that shows the decay rate assumption $\zeta>2$ in \eqref{Low-Regular-Condition} is optimal.
\end{remark}

We also have the following higher order expansions for $\zeta>n$, which gives a finer characteristic of the error term in \eqref{equ-asym-Behavior}.
\begin{theorem}\label{Thm-secondExpansion}
  Under conditions of Theorem \ref{Thm-firstExpansion},  there exist $c_0\in\mathbb R$,
  $c_k(\theta)\in\mathcal H_k^n$ with $k=1,2,\cdots,n-[2n-\zeta]-1$ such that
  \begin{equation}\label{equ-asym-expan-1}
  \begin{array}{llll}
  &\displaystyle u ( x ) - \left( \frac { 1 } { 2 } x ^ TA x + bx + c \right)\\
  -&\displaystyle
  c_0(x^T(DF_{\tau}(\lambda(A)))^{-1}x)^{\frac{2-n}{2}}
  -\sum_{k=1}^{n-[2n-\zeta]-1}c_k(\theta)\left(x^T(DF_{\tau}(\lambda(A)))^{-1} x\right)^{\frac{2-n-k}{2}}
  \\=&
  \left\{
  \begin{array}{lllll}
  O_m(|x|^{2-\min\{2n,\zeta\}}), & \min\{2n,\zeta\}-n\not\in\mathbb N,\\
    O_m(|x|^{2-\min\{2n,\zeta\}}(\ln |x|)), & \min\{2n,\zeta\}-n\in\mathbb N,\\
  \end{array}
  \right.
  \end{array}
  \end{equation}
  as $|x|\rightarrow+\infty$, where
  \begin{equation*}
\theta=\frac{(DF_{\tau}(\lambda(A)))^{-\frac{1}{2}}x}{\left(x^T(
DF_{\tau}(\lambda(A)))^{-1}x\right)^{\frac{1}{2}}}.
\end{equation*}
\end{theorem}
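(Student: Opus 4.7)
The plan is to reduce the theorem to the Poisson equation $\Delta \tilde v = \tilde g$ outside a ball and then extract each spherical-harmonic term by a multipole/Newtonian-potential expansion. First, set $v(x) := u(x) - (\frac12 x^T A x + bx + c)$, so Theorem~\ref{Thm-firstExpansion} combined with $\zeta>n$ gives $v = O_{m+1}(|x|^{2-n})$. A second-order Taylor expansion of $F_\tau(\lambda(\cdot))$ about $A$, together with $F_\tau(\lambda(A)) = f(\infty)$, produces
$$
a_{ij} v_{ij}(x) = g(x), \qquad a := DF_\tau(\lambda(A)),
$$
where $g(x) = (f(x)-f(\infty)) + R(D^2v(x))$ and $R$ vanishes quadratically at the origin. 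Since $|D^k v| = O(|x|^{2-n-k})$, the nonlinear remainder obeys $|D^k R(D^2 v)| = O(|x|^{-2n-k})$; combined with \eqref{Low-Regular-Condition} this yields $|D^k g| = O(|x|^{-\min\{2n,\zeta\}-k})$ for all $0 \le k \le m$.

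Next, the constant symmetric matrix $a = DF_\tau(\lambda(A))$ is positive definite in each of the three cases of Theorem~\ref{Thm-firstExpansion} (verified from the closed forms of $F_\tau$ together with the semi-convexity assumed on $A$), so the affine change of variables $y := a^{-1/2} x$ turns $a_{ij}\partial_{x_i}\partial_{x_j}$ into $\Delta_y$. Observe that $|y|^2 = x^T a^{-1} x$ and $y/|y| = \theta$. Setting $\tilde v(y) := v(a^{1/2}y)$ and $\tilde g(y) := g(a^{1/2}y)$, one has $\Delta_y \tilde v = \tilde g$ on $\{|y|>R_0\}$, with $|D^k \tilde g(y)| = O(|y|^{-\min\{2n,\zeta\}-k})$ and $\tilde v(y) \to 0$. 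Write $\tilde v = w_1 + w_2$, where $w_1$ is the Newtonian potential of a fixed extension of $\tilde g$ and $w_2 := \tilde v - w_1$ is harmonic in $\{|y|>R_0\}$ and decays at infinity. The harmonic $w_2$ admits a unique exterior expansion $w_2(y) = \sum_{k\ge 0} c_k(\theta)\,|y|^{2-n-k}$ with $c_k \in \mathcal H_k^n$. For $w_1$ apply the Gegenbauer/multipole expansion
$$
|y-z|^{2-n} = \sum_{k \ge 0} \frac{|z|^k}{|y|^{n-2+k}}\, C_k^{(n-2)/2}\!\left(\tfrac{y\cdot z}{|y|\,|z|}\right),\qquad |z|<|y|,
$$
splitting the integration domain as $\{|z|<|y|/2\}\cup\{|z|\ge |y|/2\}$ and truncating at $k = n-[2n-\zeta]-1$. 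Each retained term contributes a function of the form (degree-$k$ spherical harmonic)$\,\times |y|^{2-n-k}$ which is absorbed into $c_k(\theta)$; the tail of the series and the contribution from $\{|z| \ge |y|/2\}$ are controlled by the decay of $\tilde g$ and produce the stated error $O(|y|^{2-\min\{2n,\zeta\}})$. The logarithmic correction appears exactly in the resonant case $\min\{2n,\zeta\} - n \in \mathbb N$, through the radial integral $\int r^{-1}\,dr$ in the Green's-function representation.

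To promote the pointwise error to $O_m$, rescaled interior Schauder estimates on dyadic annuli $\{2^j < |y| < 2^{j+1}\}$ give the required derivative bounds: the operator $F_\tau$ is smooth at $\lambda(A)$, so standard $C^{k,\alpha}$ theory applies once the equation is read as a linear equation with bounded smooth coefficients. Transport back to $x$-coordinates by the constant invertible map $a^{1/2}$ preserves the orders of decay, the spherical-harmonic structure, and the smoothness. The main obstacle is the nonlinear feedback of $R(D^2 v)$ into $\tilde g$: each newly extracted term $c_k(\theta)|y|^{2-n-k}$ modifies $D^2 v$, and one must verify by a bootstrap that $R$ evaluated at the updated $D^2 v$ still satisfies the decay $|y|^{-\min\{2n,\zeta\}-k}$ needed for the next order of the Newtonian-potential expansion to close. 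The resonant case $\min\{2n,\zeta\}-n \in \mathbb N$ is the other delicate point, as this is precisely where the expansion picks up the $\ln|x|$ factor recorded in \eqref{equ-asym-expan-1}.
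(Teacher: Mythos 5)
Your proposal follows essentially the same route as the paper: linearize about $A$ by Newton--Leibniz (or second-order Taylor), absorb the quadratic remainder into a source $g=O(|x|^{-\min\{2n,\zeta\}})$, change variables by $(DF_\tau(\lambda(A)))^{1/2}$ to reduce to $\Delta\widetilde v=\widetilde g$ in an exterior domain, subtract a particular solution of full decay $O(|x|^{2-\min\{2n,\zeta\}})$ (you via a Gegenbauer multipole expansion of the Newtonian potential, the paper by citing its Lemmas 3.1--3.2 of \cite{bao-liu-2020}, which amount to the same thing), and expand the decaying harmonic remainder in exterior spherical harmonics truncated at order $n-[2n-\zeta]-1$.

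One correction: the ``bootstrap to close the nonlinear feedback of $R(D^2v)$'' that you flag as the main obstacle is not actually needed and is a red herring. The linearized identity $a_{ij}D_{ij}v=g$ is an exact identity for the original $v$, and the decay of $g$ is fixed once and for all by $|D^2v|=O(|x|^{-n})$ from Theorem~\ref{Thm-firstExpansion}; subtracting extracted spherical-harmonic terms from $v$ does not change the source $g$, and the first extracted term already has Hessian of size $|x|^{-n}$, so no iterative improvement of $R$'s decay is possible or necessary. The whole expansion is obtained in a single pass: $w_1$ carries the error $O(|x|^{2-\min\{2n,\zeta\}})$ (with a log in the resonant case), and the truncated tail of the harmonic $w_2$ is automatically of that order since $2-n-(n-[2n-\zeta])\le 2-\min\{2n,\zeta\}$. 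A genuinely minor bookkeeping point is that $g$, built from $D^2v=O_{m-1}(|x|^{-n})$, is a priori only $O_{m-1}$ rather than $O_m$; the final $O_m$ error in \eqref{equ-asym-expan-1} comes because the particular solution gains one derivative over the source by Schauder theory, as built into Lemmas 3.1--3.2 of \cite{bao-liu-2020}.
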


\begin{remark}\label{thm-radial}
  By computing $F_{\tau}(\lambda(D^2u))$ of radially symmetric  $u$ of form $\frac{C_1}{2}|x|^2+C_2|x|^{-k}$, we find   expansions \eqref{equ-asym-Behavior} and    \eqref{equ-asym-expan-1} are optimal for all $\zeta>2$ in the sense that
  the series of  $k$ doesn't exists or cannot be taken up to  $n-[2n-\zeta]$
  when $2<\zeta\leq n$ or $\zeta>n$ respectively
  since $c_{n-[2n-\zeta]}$ doesn't belong to space $\mathcal H_n^{n-[2n-\zeta]}$ in general.
\end{remark}

The paper is organized as follows.  In section \ref{sec-convergeHessian} we prove that the Hessian matrix $D^2u$ converge to some constant matrix $A\in\mathtt{Sym}(n)$ at infinity, in order to make preparation for proving Theorem \ref{Thm-firstExpansion}. In the next two sections we give the proofs of  Theorems \ref{Thm-firstExpansion} and \ref{Thm-secondExpansion}   respectively based on the detailed analysis of the solutions of non-homogeneous linearized equations.

Hereinafter, we let $B_r(x)$ denote a ball centered at $x\in\mathbb R^n$ with radius $r$. Especially for $x=0$, we let $B_r:=B_r(0)$. For any open subset $\Omega\subset\mathbb R^n$, we let $\overline{\Omega}$ denote the closure of $\Omega$ and $\Omega^c$ denote the complement of $\Omega$ in $\mathbb R^n$.

\section{Convergence of Hessian at infinity}\label{sec-convergeHessian}

In this section, we study the asymptotic behavior at infinity of Hessian matrix of classical solutions of \eqref{Equ-exterior}. We prove a weaker convergence than \eqref{equ-asym-Behavior} in Theorem \ref{Thm-firstExpansion} and $D^2u$ has bounded $C^{\alpha}$ norm for some $0<\alpha<1$ under a weaker assumption on $f$.
By
interior regularity as Lemma 17.16 of \cite{GT} and
 extension theorem as Theorem 6.10 of \cite{EvansMeasureTheory}, we
  may change the value of $u, f$ on a compact subset of $\mathbb R^n$ and prove only  for $u\in C^{2,\alpha}(\mathbb R^n)$ and $f\in C^{\alpha}(\mathbb R^n)$.
\begin{theorem}\label{thm-sec2}
  Let $u$ be as in Theorem \ref{Thm-firstExpansion}, $f\in C^{\alpha}(\mathbb R^n)$ for some $0<\alpha<1$ and satisfy
    \begin{equation}\label{equ-temp-6}
    \limsup_{|x|\rightarrow\infty}\left(
    |x|^{\zeta}|f(x)-f(\infty)|+
    |x|^{\alpha+\zeta'}[f]_{C^{\alpha}
    (\overline{B_{\frac{|x|}{2}}(x)})}\right)<\infty
    \end{equation}
  \begin{enumerate}[(1)]
    \item
    with some $\zeta>1,\zeta'>0$ for $\tau=0$;
    \item \label{case-2.1-2}
    with some $\zeta>1,\zeta'>0$  for $\tau\in(0,\frac{\pi}{4})$;
    \item
     with some $\zeta>0,\zeta'>0$  for  $\tau=\frac{\pi}{4}$.
  \end{enumerate}
 Then there exist $\epsilon>0, A\in\mathtt{Sym}(n)$ with $F_{\tau}(\lambda(A))=f(\infty)$ and $C>0$ such that
  $$
  ||D^2u||_{C^{\alpha}(\mathbb R^n)}\leq C,
  \quad\text{and}\quad \left|D^2u(x)-A\right|\leq \dfrac{C}{|x|^{\epsilon}},\quad\forall~|x|\geq 1.
  $$

\end{theorem}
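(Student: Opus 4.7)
The plan is to extract uniform $C^{2,\alpha}$ bounds for $u$ on all scales, then combine Bernstein-type rigidity with a blow-down argument to identify the limiting Hessian, and finally run a Schauder/Campanato iteration on the linearized equation to upgrade qualitative convergence into a polynomial rate.

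First, I would show that for each large $R$, the rescaled function $u_R(x) := R^{-2} u(Rx)$ solves $F_\tau(\lambda(D^2 u_R)) = f(Rx)$ and lies between two fixed quadratic barriers on $B_2 \setminus B_{1/2}$. In case (1) this quadratic growth comes from Caffarelli's level-set estimates for convex solutions of Monge-Amp\`ere equations with right-hand side close to a constant; in cases (2) and (3) the upper bound is assumed in \eqref{Condition-QuadraticGrowth-1}--\eqref{Condition-QuadraticGrowth-2}, and the prescribed semi-convexity $D^2 u > (-a+b)I$ (resp.\ $D^2 u > -I$) gives the lower bound. Together with the decay hypothesis \eqref{equ-temp-6}, the semi-convexity keeps $\lambda(D^2 u_R)$ in a compact subset of the domain of $F_\tau$ where the operator is uniformly elliptic, so interior $C^{2,\alpha}$ estimates applied to $u_R$ yield $\|D^2 u_R\|_{C^\alpha(B_{3/2} \setminus B_{1/2})} \le C$ independent of $R$. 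Undoing the scaling gives the first claim $\|D^2 u\|_{C^\alpha(\mathbb{R}^n)} \le C$.

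Second, to identify the limit, I would pass to a subsequence $R_j \to \infty$ and extract, via Arzel\`a--Ascoli applied to $u_{R_j}$ minus a suitable affine correction, a limit $u_\infty$ solving $F_\tau(\lambda(D^2 u_\infty)) = f(\infty)$ on $\mathbb{R}^n \setminus \{0\}$ with quadratic growth and the same semi-convexity. Since the equation is uniformly elliptic on the closed range of $D^2 u_\infty$, the isolated singularity at the origin is removable and $u_\infty$ extends entire. The Bernstein-type theorems of J\"orgens--Calabi--Pogorelov ($\tau=0$) and Warren/Flanders/Yuan ($\tau\in(0,\pi/4]$) then force $u_\infty = \frac{1}{2}x^T A x + bx + c$ for some symmetric $A$ with $F_\tau(\lambda(A)) = f(\infty)$. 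A standard uniqueness-of-blow-down argument (showing any two subsequential limits share the same quadratic part, by contradiction with the semi-convexity and the $C^\alpha$ bound on $D^2 u$) gives $D^2 u(x) \to A$ as $|x|\to\infty$.

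Third, to promote this into the rate $|D^2 u - A| \le C|x|^{-\epsilon}$, I would set $w := u - \frac{1}{2}x^T A x - bx - c$ and expand
\[
0 = F_\tau(\lambda(A + D^2 w)) - F_\tau(\lambda(A)) - (f(x)-f(\infty)) = a^{ij}(x)\, w_{ij} - (f(x) - f(\infty)),
\]
where $a^{ij}(x) = \int_0^1 \partial_{M_{ij}} F_\tau(\lambda(A + t D^2 w))\, dt$ is uniformly elliptic with a $C^\alpha$ bound inherited from the uniform $C^\alpha$ bound on $D^2 u$ and the smoothness of $DF_\tau$. Rescaling $w_R(x) := R^{-2} w(Rx)$ and applying interior Schauder estimates on $B_{3/2}\setminus B_{1/2}$, with right-hand side controlled by $C R^{-\zeta}$ in $L^\infty$ and $C R^{-\zeta-\alpha-\zeta'+\alpha}$ in $C^\alpha$ via \eqref{equ-temp-6}, yields an estimate of the form $\operatorname{osc}_{B_{3/2}\setminus B_{1/2}} D^2 w_R \le \theta \operatorname{osc}_{B_4\setminus B_{1/4}} D^2 w_R + C R^{-\min(\zeta,\zeta')\wedge\epsilon_0}$ for some $\theta<1$, followed by a dyadic Campanato iteration that converts the geometric decay at discrete scales into the continuous bound $|D^2 w(x)| \le C|x|^{-\epsilon}$.

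I expect the main obstacle to be controlling the ellipticity of the linearized operator $a^{ij}$ uniformly at infinity before the convergence $D^2 u \to A$ is known quantitatively. The semi-convexity assumptions are precisely what prevent the eigenvalues of $D^2 u$ from escaping to the boundary of the domain of $F_\tau$, but in case (1) this requires combining Pogorelov-type interior estimates with the quadratic level-set estimates of Caffarelli, which in turn rely on the very quadratic growth we wish to prove. Untangling this circular dependence by a careful scaling/compactness argument, together with checking that the Hessian $DF_\tau$ stays bounded and bounded below along the solution, is the technical heart of the proof.
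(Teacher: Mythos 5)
Your approach is genuinely different from the paper's, and it contains a gap that you yourself half-recognize but leave unresolved, namely the uniform ellipticity of $F_\tau$ along the rescaled solutions. In cases (2) and (3) the semi-convexity hypotheses $D^2u>(-a+b)I$, resp.\ $D^2u>-I$, only bound the eigenvalues of $D^2u$ from \emph{below}. Knowing $F_\tau(\lambda(D^2u))=f$ with $f$ bounded gives no upper bound on the eigenvalues either, since each summand of $F_\tau$ is bounded as $\lambda_i\to+\infty$. So the claim that ``the semi-convexity keeps $\lambda(D^2 u_R)$ in a compact subset of the domain of $F_\tau$ where the operator is uniformly elliptic'' is not justified, and without it you cannot invoke interior $C^{2,\alpha}$ estimates for the fully nonlinear equation on the annuli $B_{3/2}\setminus B_{1/2}$. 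This is precisely the circular dependence you flag at the end, and it is the crux of the theorem: until you know $D^2u\to A$ with $A$ interior to the semi-convexity cone, you have no Pogorelov-type Hessian bound to hand for the general $F_\tau$.

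The paper sidesteps this with a Legendre transform that removes the need for any a priori uniform ellipticity. Setting $\bar u=u+\tfrac{a+b}{2}|x|^2$ and passing to the Legendre dual $\bar v$ (case (2)), the eigenvalues of $D^2\bar v$ become $\widetilde\lambda_i=\frac{\lambda_i+a-b}{\lambda_i+a+b}\in(0,1)$, hence automatically lie in a bounded set, and $\bar v$ solves a genuine Monge--Amp\`ere equation $\det D^2\bar v=g$. Caffarelli's and Caffarelli--Li's regularity theory for Monge--Amp\`ere exploits convexity, sections, and affine normalization, and does not require a priori uniform ellipticity; this is what makes the $C^\alpha$-Hessian estimate and the convergence $D^2\bar v\to\widetilde A$ available. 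The delicate step is then the strip argument showing $I-\widetilde A$ is invertible, so the limiting Hessian sits strictly inside the semi-convexity cone; only after that can one bound $D^2\bar v$ away from $0$ and $I$ and translate back to a $C^\alpha$ bound on $D^2u$. Case (3) is reduced in the same spirit, but to a Poisson equation for the Legendre dual. In case (1) your outline essentially matches the paper's, since both rest on Caffarelli--Li's level-set estimates. A further smaller issue with your blow-down step: the Bernstein-type rigidity results of Warren you invoke for $\tau\in(0,\pi/4]$ require strict semi-convexity, and it is not clear that the blow-down limit $u_\infty$ preserves the strict inequality rather than degenerating to equality; again the strip argument in the paper is what rules this degeneration out. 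If you wanted to make a direct blow-down proof work, you would need a substitute for the Legendre transform that furnishes an a priori Hessian upper bound for $F_\tau$ with $\tau\in(0,\pi/4]$, and no such estimate is supplied in your outline.
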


The proof is separated into three subsections according to three different range of $\tau$.

\subsection{$\tau=0$ case}

In $\tau=0$ case, \eqref{Equ-exterior} becomes the Monge-Amp\`ere equation \eqref{equ-MA}.

\begin{theorem}\label{roughestimate}
  Let $u \in C ^ { 0 } \left( \mathbb { R } ^ { n } \right)$  be a convex viscosity solution of
  \begin{equation}\label{Monge-Ampere}
\operatorname{det} D^{2} u=\psi(x)\quad\text{in } \mathbb{R}^{n}
\end{equation}
  with $u(0)=\min_{\mathbb{R}^n}u=0$, where $
  0<\psi\in C ^ { 0 } \left( \mathbb { R } ^ { n } \right)
  $
  and $$\psi^{\frac{1}{n}}-1\in L^n(\mathbb{R}^n).
$$
  Then there exists a linear transform $T$ satisfying $\det T=1$ such that $v:=u\circ T$ satisfies
  $$
  \left|v-\dfrac{1}{2}|x|^2\right|\leq C|x|^{2-\varepsilon},\quad \forall~ |x|\geq 1.
  $$
  for some $C>0$ and $\varepsilon>0$.
\end{theorem}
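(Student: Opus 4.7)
The plan is to follow the sub-level-set normalization framework of Caffarelli--Li \cite{CL} (for $\psi\equiv 1$ outside a compact set) and its extension by Bao--Li--Zhang \cite{BLZ} to decaying perturbations, and to extract a polynomial rate of convergence from the hypothesis $\psi^{1/n}-1\in L^n(\mathbb{R}^n)$.

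First, I would analyse the convex sub-level sets $S_h := \{u < h\}$ for $h > 0$. Since $u$ is a convex viscosity solution with $u(0)=\min u = 0$ and $\psi$ is asymptotically close to $1$ in $L^n$ sense, each $S_h$ is bounded and $|S_h|$ is comparable to $h^{n/2}$ (via the Monge-Amp\`ere measure identity $\int_{S_h}\psi\,dx = \mu_u(S_h)$ combined with the $L^n$ hypothesis). John's lemma then provides unimodular linear maps $T_h$ and centers $x_h$ such that the normalized sets $T_h^{-1}(S_h - x_h)$ are comparable to $B_{\sqrt{2h}}$, i.e.\ sandwiched between two concentric Euclidean balls of comparable radius. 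Setting $u_h(x) := u(T_h x + x_h)/h$, the function $u_h$ solves $\det D^2 u_h = \psi(T_h x + x_h)$ on its normalized sub-level set, and Caffarelli's interior $C^{2,\alpha}$ estimates \cite{7} yield uniform compactness of $\{u_h\}_{h\geq 1}$. Any subsequential limit is a convex solution of $\det D^2 u_\infty = 1$ on $\mathbb{R}^n$ with normalized sub-level sets, hence equal to $\tfrac{1}{2}|x|^2$ by J\"orgens--Calabi--Pogorelov, which qualitatively shows $T_h\to T$ for some unimodular $T$.

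To upgrade this to a polynomial rate, I would compare $T_h$ with $T_{2h}$ using the $L^n$-defect $\eta_h := \|\psi^{1/n}-1\|_{L^n(S_{2h}\setminus S_h)}$, which is summable along a dyadic sequence by the hypothesis $\psi^{1/n}-1\in L^n(\mathbb{R}^n)$. A stability estimate for normalized Monge-Amp\`ere solutions (obtained from Caffarelli's $W^{2,p}$ theory together with the ABP maximum principle) gives $\|T_{2h}-T_h\|\lesssim \eta_h^q$ for some $q>0$, and the dyadic tail $\sum_{k\geq k_0}\|T_{2^{k+1}}-T_{2^k}\|$ then decays polynomially in $2^{k_0}$, yielding $\|T_h - T\|=O(h^{-\delta})$ for some $\delta>0$. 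Setting $v := u\circ T$, this gives the quantitative statement that the sub-level sets of $v$ are round Euclidean balls of radius $\sqrt{2h}$ up to error $O(h^{(1-\delta)/2})$.

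To convert sub-level-set geometry into the pointwise estimate, I would construct radial barriers $w_\pm(x) = \tfrac{1}{2}|x|^2 \pm C|x|^{2-\varepsilon}$. A direct computation shows $\det D^2 w_\pm$ differs from $1$ by $O(|x|^{-\varepsilon})$, which dominates/is-dominated-by $\psi\circ T$ on each dyadic annulus once $\varepsilon$ is smaller than the effective decay of $\psi-1$ at infinity. The Monge-Amp\`ere comparison principle on annuli $\{1 < |x| < R\}$, with boundary data supplied by the sub-level-set estimate above, then yields $\left|v - \tfrac{1}{2}|x|^2\right|\leq C|x|^{2-\varepsilon}$ for all $|x|\geq 1$. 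The \emph{main obstacle} is quantifying the convergence $T_h \to T$: extracting a polynomial rate from mere $L^n$-integrability (which is only a summability condition, not a pointwise decay) requires a careful stability estimate for normalized Monge-Amp\`ere solutions and a judicious choice of $\varepsilon$ relative to the Caffarelli regularity exponent and the dyadic decay of $\eta_h$.
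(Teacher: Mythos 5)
The paper does not supply its own proof of this theorem; it simply points to the proof of Theorem~1.2 in Bao--Li--Zhang \cite{BLZ}, which in turn builds on Caffarelli--Li's \cite{CL} level-set normalization. Your proposal correctly identifies that framework (sub-level sets $S_h$, John normalization $T_h$, Caffarelli interior estimates for compactness, J\"orgens--Calabi--Pogorelov to identify the blow-down limit), so the qualitative convergence $T_h\to T$ is on the right track. But the quantitative step that you yourself flag as the ``main obstacle'' is genuinely wrong as written, and this is exactly where the real content of the lemma lies.

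The issue is twofold. First, your claimed estimate $\|T_{2h}-T_h\|\lesssim\eta_h^q$ with $\eta_h=\|\psi^{1/n}-1\|_{L^n(S_{2h}\setminus S_h)}$ cannot hold: even for $\psi\equiv 1$ one has $\eta_h\equiv 0$ but $T_{2h}\neq T_h$ in general, because the sub-level sets of a non-quadratic solution are genuinely different ellipsoids. The correct iteration estimate from \cite{CL,BLZ} has the form $\|T_{2h}T_h^{-1}-I\|\lesssim h^{-\tau}+(\text{RHS defect})^q$, where $h^{-\tau}$ is the geometric contraction coming from the interior $C^{2,\alpha}$ compactness and the JCP theorem. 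Second, and more fundamentally, mere $L^n$-summability $\sum_k\eta_{2^k}^n<\infty$ does \emph{not} give polynomial decay of the tail $\sum_{k\ge k_0}\eta_{2^k}^q$: take $\eta_{2^k}^n\sim k^{-1}(\log k)^{-2}$ and the tail decays only logarithmically in $2^{k_0}$, and for $q<n$ the series need not even converge. The mechanism that actually produces the polynomial rate in \cite{BLZ} is the Jacobian of the normalization: since $\det T_h\sim h^{n/2}$, the $L^n$ norm of the rescaled right-hand side defect over the normalized domain is
\begin{equation*}
\left(\int_{\Omega_h}\bigl|\tilde\psi_h^{1/n}-1\bigr|^n\,dx\right)^{1/n}
=(\det T_h)^{-1/n}\left(\int_{S_h}\bigl|\psi^{1/n}-1\bigr|^n\,dy\right)^{1/n}
\le C\,h^{-1/2}\,\|\psi^{1/n}-1\|_{L^n(\mathbb R^n)},
\end{equation*}
which decays like $h^{-1/2}$ even though $\eta_h$ itself need not decay at all. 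It is this rescaled, Jacobian-corrected quantity --- not $\eta_h$ --- that enters the stability estimate, and it automatically yields a polynomial rate. Your proposal omits this entirely.

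Finally, the closing barrier argument with $w_\pm=\tfrac12|x|^2\pm C|x|^{2-\varepsilon}$ requires a pointwise decay $|\psi-1|=O(|x|^{-\varepsilon})$ to run the Monge--Amp\`ere comparison principle on annuli, but the hypothesis here is only $\psi^{1/n}-1\in L^n(\mathbb R^n)$, which gives no pointwise control whatsoever. In \cite{BLZ} the estimate $|v-\tfrac12|x|^2|\le C|x|^{2-\varepsilon}$ is read off directly from the sub-level-set geometry (the normalized sets $T^{-1}S_h$ are $h^{-\delta}$-close to balls of radius $\sqrt{2h}$) together with convexity of $u$; no auxiliary barriers are needed at this stage. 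So the proposal needs both the Jacobian-rescaling observation in the iteration and a replacement for the final barrier step before it becomes a proof.
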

Theorem \ref{roughestimate} can be found in the proof of Theorem 1.2 in \cite{BLZ}, which is based on the level set method by Caffarelli-Li \cite{CL}.

\begin{corollary}\label{corollary_estimate}
Let $u\in C^0(\mathbb R^n)$ be a convex viscosity solution of \eqref{Equ-exterior} with $f\in C^0(\mathbb R^n)$ satisfies
  $$
  \limsup_{|x|\rightarrow\infty}|x|^{\zeta}|f(x)-f(\infty)|<\infty
  $$
  for some  $\zeta>1$. Then there exists a linear transform $T$ satisfying $\det T=1$ such that $v:=u\circ T$ satisfies
   \begin{equation}\label{roughestimate_appendix}
  \left|v-\dfrac{\exp(f(\infty))}{2}|x|^2\right|\leq C|x|^{2-\varepsilon},\quad \forall~ |x|\geq 1
  \end{equation}
  for some $C>0$ and $\varepsilon>0$.
\end{corollary}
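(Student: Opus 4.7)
The plan is to reduce \eqref{roughestimate_appendix} to Theorem~\ref{roughestimate} by an affine rescaling that flattens the right-hand side. Since \eqref{Equ-exterior} with $\tau=0$ reads $\det D^2 u=e^{nf(x)}$, I would set $\tilde u(x):=e^{-f(\infty)}u(x)$; then $\tilde u$ is a convex viscosity solution of $\det D^2\tilde u=\psi$ with $\psi(x):=e^{n(f(x)-f(\infty))}$. From the hypothesis $|f(x)-f(\infty)|\le C|x|^{-\zeta}$ for large $|x|$ one obtains $|\psi^{1/n}-1|=|e^{f-f(\infty)}-1|\le C|x|^{-\zeta}$, and since $\int_{|x|\ge 1}|x|^{-n\zeta}\,dx<\infty$ precisely when $\zeta>1$, the assumption $\psi^{1/n}-1\in L^n(\mathbb R^n)$ of Theorem~\ref{roughestimate} is satisfied (the integrand is bounded on the complementary compact set).

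Next I would normalize. Because $\psi$ is bounded below away from zero outside a compact set, standard quadratic-growth estimates for convex viscosity solutions of Monge-Amp\`ere ensure that $\tilde u$ attains its minimum at some point $x_0\in\mathbb R^n$. Put $\hat u(x):=\tilde u(x+x_0)-\tilde u(x_0)$, so that $\hat u(0)=\min_{\mathbb R^n}\hat u=0$ and $\det D^2\hat u(x)=\psi(x+x_0)$ still fulfills the $L^n$-integrability hypothesis (translation preserves $L^n$). Theorem~\ref{roughestimate} then supplies a linear $T$ with $\det T=1$ and some $\varepsilon>0$, which I may shrink so that $\varepsilon\le 1$, such that
\begin{equation*}
\bigl|\hat u(Tx)-\tfrac{1}{2}|x|^2\bigr|\le C|x|^{2-\varepsilon}, \qquad |x|\ge 1.
\end{equation*}

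Unwinding the rescaling and translation yields
\begin{equation*}
u(Tx+x_0)=u(x_0)+\tfrac{e^{f(\infty)}}{2}|x|^2+O(|x|^{2-\varepsilon}).
\end{equation*}
To pass from $u(Tx+x_0)$ to $v(x):=u(Tx)$ I would exploit convexity together with at-most-quadratic growth of $u$: the subdifferential of an asymptotically quadratic convex function grows at most linearly, so $|u(y+x_0)-u(y)|\le C(1+|y|)$ for all large $|y|$. Applied with $y=Tx$ and using $|Tx|\sim|x|$, this gives $|v(x)-u(Tx+x_0)|=O(|x|)$, which is absorbed into the error term $O(|x|^{2-\varepsilon})$ because $\varepsilon\le 1$; constants are likewise absorbed. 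This delivers \eqref{roughestimate_appendix}.

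I expect the main technical point to be this final absorption: verifying that the fixed translation $x_0$ introduced to normalize $\tilde u$ leaves no residual translation in the conclusion, and that the at-most-linear correction arising from shifting the argument of an asymptotically quadratic convex function genuinely fits into $O(|x|^{2-\varepsilon})$ after shrinking $\varepsilon$. Everything else reduces to elementary manipulations of $e^{f-f(\infty)}-1$ and a direct invocation of Theorem~\ref{roughestimate}.
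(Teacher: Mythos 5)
Your overall strategy is the same as the paper's: reduce to Theorem~\ref{roughestimate} by verifying the $L^n$ hypothesis on $\psi^{1/n}-1$ (your computation here matches the paper's), apply it to a normalized version of $u$, and then absorb the residual affine terms into the error $O(|x|^{2-\varepsilon})$ after shrinking $\varepsilon\le 1$. The difference is in the normalization step. You translate to a global minimizer $x_0$ of $\tilde u$, which requires proving that a minimizer exists; you wave at ``standard quadratic-growth estimates,'' but this is a genuine (if standard) step: a convex function need not attain its infimum, and the fact that $\det D^2\tilde u$ being pinched between positive constants forces a minimizer comes from the Caffarelli/Caffarelli--Li section theory, not from an elementary observation. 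The paper instead subtracts a supporting affine function at the origin, setting $\widetilde u(x)=e^{-f(\infty)}\bigl(u(x)-Du(0)\cdot x-u(0)\bigr)$ with $Du(0)$ any element of $\partial u(0)$. This automatically gives $\widetilde u(0)=0$ and $\widetilde u\ge 0$ by convexity, so $0$ is a minimizer by construction and no existence argument is needed; the Hessian is unchanged, so the Monge--Amp\`ere right-hand side is the same. Both choices leave a linear term to absorb ($Du(0)\cdot Tx+u(0)$ in the paper's case, the translation by $x_0$ in yours), and your absorption argument via at-most-linear growth of the subdifferential of an asymptotically quadratic convex function is correct, though it is worth noting that the paper's version makes this step trivial because the discarded affine function is explicit. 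So the proposal is essentially correct, but the existence-of-minimizer claim should either be justified by citing the appropriate strict convexity or section boundedness result, or, better, be avoided entirely by adopting the paper's supporting-plane normalization.
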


\begin{proof}
By a direct computation,
$$
\widetilde u(x):=\dfrac{1}{\exp(f(\infty))}\left(u(x)-Du(0)x-u(0)\right)
$$
is a convex viscosity solution of
$$
\det D^2\widetilde u=e^{n(f(x)-f(\infty))}=:\widetilde f(x)\quad\text{in }\mathbb R^n.
$$
By a direct computation, $|\widetilde  f(x)-1|\leq C|x|^{-\zeta}$ for some $C>0$ and
$$
  \int _ { \mathbb{R}^n\setminus B_{1} } \left|(\widetilde f(x))^ { \frac { 1 } { n } } - 1 \right| ^ { n } d x  \leq C
 \int _ { \mathbb{R}^n\setminus B_{1} } \left| \widetilde f (x ) - 1 \right| ^ { n } d x \leq C \int _ { \mathbb { R } ^ { n } \backslash B _ { 1 } } |x|^{-\zeta n} d x <\infty.
$$
The result follows immediately by applying Theorem \ref{roughestimate} to $\widetilde u$.
 \end{proof}

 As a consequence, we have the following convergence of Hessian matrix for solutions of \eqref{Monge-Ampere}. The proof is similar to the one in Bao-Li-Zhang \cite{BLZ} and in Caffarelli-Li \cite{CL}. Since there are some differences from their proof, we provide the details here for reading simplicity.

\begin{theorem}\label{corollary_estimate3}
  Let $u \in C ^ { 0 } \left( \mathbb { R } ^ { n } \right)$ be a convex viscosity solution of \eqref{Equ-exterior},  $
  f \in C^{\alpha}(\mathbb{R}^n ) $ satisfy \eqref{equ-temp-6} for some $0<\alpha<1$, $\zeta>1$ and $\zeta'>0$.
  Then $u\in C^{2,\alpha}(\mathbb R^n)$,
  \begin{equation}\label{HolderRegularity_MA}
    ||D^2u||_{C^{\alpha}(\mathbb R^n)}\leq C,
  \end{equation}
  and
  \begin{equation}\label{ConvergeRateofHessian}
  u-\left(\frac{1}{2}x^TAx+b x+c\right)=O_2(|x|^{2-\epsilon})
  \end{equation}
as $|x|\rightarrow \infty$,
where
$\epsilon:=\min\{\varepsilon,\zeta,\zeta'\}$,
 $\varepsilon$ is the positive constant from Theorem \ref{roughestimate}, $A\in\mathtt{Sym}(n)$ with $\det A=\exp(nf(\infty))$, $b\in\mathbb R^n$, $c\in\mathbb R$ and $C>0$.
\end{theorem}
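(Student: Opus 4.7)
The strategy is to bootstrap the rough $C^0$-quadratic asymptotics furnished by Corollary \ref{corollary_estimate} into a $C^{2,\alpha}$-closeness to the quadratic, by rescaling at each large scale $R$ and applying Caffarelli's interior $C^{2,\alpha}$ estimates for the Monge-Amp\`ere equation with H\"older-continuous right-hand side; then integrate the decay of $D^2u-A$ twice along rays to obtain the expansion.

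First I would apply Corollary \ref{corollary_estimate} to get a linear map $T$ with $\det T=1$ such that $v:=u\circ T$ satisfies $|v(x)-\tfrac{e^{f(\infty)}}{2}|x|^{2}|\leq C|x|^{2-\varepsilon}$ for $|x|\geq 1$. Define the target matrix $A:=e^{f(\infty)}(TT^{T})^{-1}\in\mathtt{Sym}(n)$, which indeed satisfies $\det A=e^{n f(\infty)}$. For each $R\gg 1$ I would introduce the rescaling
\[
v_{R}(y):=\frac{1}{R^{2}}v(Ry),\qquad y\in B_{3}\setminus B_{1/3},
\]
which solves $\det D^{2}v_{R}(y)=e^{n f(Ry)}$ and, by the rough estimate, satisfies $\|v_{R}-\tfrac{e^{f(\infty)}}{2}|y|^{2}\|_{L^{\infty}(B_{3}\setminus B_{1/3})}\leq C R^{-\varepsilon}$. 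Using the assumption \eqref{equ-temp-6}, the right-hand side obeys $|e^{n f(Ry)}-e^{n f(\infty)}|\leq C R^{-\zeta}$ and $[e^{n f(R\cdot)}]_{C^{\alpha}}\leq C R^{-\zeta'}$ on annular regions, so the rescaled equations are uniformly close (in $C^{\alpha}$) to the reference equation $\det D^{2}w=e^{n f(\infty)}$ whose radial solution is precisely $\tfrac{e^{f(\infty)}}{2}|y|^{2}$.

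Next I would invoke Caffarelli's interior $C^{2,\alpha}$ regularity and a standard perturbation/iteration argument (comparing $v_{R}$ to the quadratic solution of the constant-coefficient problem) to conclude
\[
\|D^{2}v_{R}\|_{C^{\alpha}(B_{2}\setminus B_{1})}\leq C,\qquad \bigl|D^{2}v_{R}(y)-e^{f(\infty)}I\bigr|\leq C R^{-\epsilon}\ \text{ on }\ B_{2}\setminus B_{1},
\]
with $\epsilon=\min\{\varepsilon,\zeta,\zeta'\}$. Undoing the scaling yields $|D^{2}v(x)-e^{f(\infty)}I|\leq C|x|^{-\epsilon}$ and a uniform $C^{\alpha}$ bound on $D^{2}v$; the relation $D^{2}v(x)=T^{T}D^{2}u(Tx)T$ transfers both the decay and the H\"older bound to $D^{2}u$, giving $|D^{2}u(y)-A|\leq C|y|^{-\epsilon}$ and \eqref{HolderRegularity_MA} on all of $\mathbb{R}^{n}$ after absorbing compact-set modifications.

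Finally, integrating the decay $|D^{2}u-A|=O(|x|^{-\epsilon})$ along rays produces a vector $b\in\mathbb{R}^{n}$ with $|Du(x)-Ax-b|=O(|x|^{1-\epsilon})$, and a second integration produces $c\in\mathbb{R}$ with $|u(x)-(\tfrac{1}{2}x^{T}Ax+bx+c)|=O(|x|^{2-\epsilon})$, which together give exactly the $O_{2}(|x|^{2-\epsilon})$ statement in \eqref{ConvergeRateofHessian}. The main technical obstacle is the quantitative step: upgrading the qualitative interior $C^{2,\alpha}$ theory of Caffarelli to the explicit rate $R^{-\epsilon}$ on each scale, which requires subtracting off the quadratic, writing the error as a solution of a linear uniformly elliptic equation (the linearization of $\det D^{2}\cdot$ at $e^{f(\infty)}I$), and applying Schauder estimates with a careful tracking of how the three decay exponents $\varepsilon,\zeta,\zeta'$ enter; the decay of $Du$ and $u$ afterwards is routine.
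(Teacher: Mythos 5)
Your proposal follows essentially the same route as the paper: apply Corollary \ref{corollary_estimate} to get the rough quadratic asymptotics, rescale at each dyadic scale $R$, use Caffarelli/Jian--Wang interior estimates to get uniform $C^{2,\alpha}$ control of the rescaled solutions, subtract the model quadratic, write the error as a solution of a linear uniformly elliptic equation via Newton--Leibniz, and apply interior Schauder estimates to obtain an explicit decay rate. This matches the paper's two-step structure exactly.

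The one place where you deviate, and where a genuine (if borderline) gap appears, is the final passage from Hessian decay to the full $O_2$ estimate. You state the Schauder conclusion only as $|D^2v_R(y)-e^{f(\infty)}I|\leq CR^{-\epsilon}$ and then propose to obtain $b$ and $c$ by integrating $D^2u-A=O(|x|^{-\epsilon})$ twice along rays. That integration loses a $\ln|x|$ factor precisely when $\epsilon=1$ (and the argument needs care when $\epsilon<1$, since $Du-Ax$ need not converge). The paper avoids this by running Schauder on the rescaled error itself: setting $w_R(y)=(4/R)^2\bigl(v-\tfrac{e^{f(\infty)}}{2}|\cdot|^2\bigr)(x+\tfrac{R}{4}y)$, using $\|w_R\|_{C^0(\overline{B_2})}\leq CR^{-\varepsilon}$ (which comes from the rough $C^0$ estimate of Corollary \ref{corollary_estimate}) together with $\|f_R-e^{nf(\infty)}\|_{C^\alpha}\leq CR^{-\min\{\zeta,\zeta'\}}$ as inputs, so that the conclusion is $\|w_R\|_{C^{2,\alpha}(\overline{B_{1/2}})}\leq CR^{-\epsilon}$. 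Scaling this back gives $w=O_2(|x|^{2-\epsilon})$ directly, with the $C^0$ and $C^1$ decay arriving simultaneously and no logarithm in the borderline case. Since you already have all the ingredients (the rough $C^0$ estimate and the Schauder step), you should state the Schauder bound for the full error $w_R$ rather than just its Hessian; then the integration step becomes unnecessary and the borderline loss disappears.
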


\begin{proof}

  By  Corollary \ref{corollary_estimate}, there exist a linear transform $T$, $\varepsilon>0$ and $C>0$  such that $v:=u\circ T$ satisfies (\ref{roughestimate_appendix}).

  \textbf{Step 1:} prove $C^{\alpha}$ boundedness of Hessian \eqref{HolderRegularity_MA}.
Let
\begin{equation*}
v_{R}(y)=\left(\frac{4}{R}\right)^{2} v\left(x+\frac{R}{4} y\right), \quad|y| \leq 2
\end{equation*}
for $|x|=R>2$.
By (\ref{roughestimate_appendix}),
\begin{equation*}
\left\|v_{R}\right\|_{C^0\left(\overline{B_{2}}\right)} \leq C
\end{equation*}
for some $C>0$ for all $R\geq 2$.  Then $v_R$ satisfies
\begin{equation}\label{equ-temp-4}
\operatorname{det}\left(D^{2} v_{R}(y)\right)=\exp\left(nf\left(x+\frac{R}{4} y\right)\right)=: f_{R}(y) \quad \text { in } B_{2}.
\end{equation}
By a direct computation, there exists $C>0$ uniform to $x$ such that
$$
||f_{R}-\exp(nf(\infty))||_{C^{0}(\overline{B_2})}\leq C R^{-\zeta}
$$
and for all $y_1,y_2\in B_2$,
$$
\dfrac{|f_R(y_1)-f_R(y_2)|}{|y_1-y_2|^{\alpha}}=
\dfrac{|f(z_1)-f(z_2)|}{|z_1-z_2|^{\alpha}}\cdot (\frac{R}{4})^{\alpha}\leq CR^{-\zeta'},
$$
where $z_i:=x+\frac{R}{4}y_i\in B_{\frac{|x|}{2}}(x)$.
Applying the interior estimate by Caffarelli \cite{7}, Jian-Wang \cite{25} on $B_2$, we have
\begin{equation}\label{CalphaEstimate}
\left\|D^{2} v_{R}\right\|_{C^{\alpha}\left(\overline{B_{1}}\right)} \leq C
\end{equation}
and hence
\begin{equation}\label{2.9}
\frac{1}{C}I\leq D^{2} v_{R} \leq C I\quad \text {in}~ B_{1}
\end{equation}
for some $C$ independent of $R$.
For any $|x|=R\geq 2$, we have
\begin{equation}\label{equ-bddHessian}
|D^2v(x)|=|D^2v_R(0)|\leq
||D^2v_R||_{C^0(\overline{B_1})}\leq C.
\end{equation}
For any $x_1,x_2\in B_{2}^c$ with $0<|x_2-x_1|\leq \frac{1}{4}|x_1|$, let $R:=|x_1|>2$, by (\ref{CalphaEstimate}),
\begin{equation*}
\begin{array}{llll}
\dfrac{\left|D^{2} v\left(x_{1}\right)-D^{2} v\left(x_{2}\right)\right|}{\left|x_{1}-x_{2}\right|^{\alpha}}
&=&\dfrac{\left|D^{2} v_{R}\left(0\right)-D^{2} v_{R}\left(\frac{4(x_2-x_1)}{|x_1|}\right)\right|}{\left|x_{1}-x_{2}\right|^{\alpha}}\\
&\leq & [D^2v_{R}]_{C^{\alpha}(\overline{B_1})}\cdot \left(\frac{4}{|x_1|}\right)^{\alpha}\\
&\leq & CR^{-\alpha}.\\
\end{array}
\end{equation*}
For any $x_1,x_2\in B_{2}^c$ with $|x_2-x_1|\geq \frac{1}{4}|x_1|$, by \eqref{equ-bddHessian},
$$
\dfrac{|D^2v(x_1)-D^2v(x_2)|}{|x_1-x_2|^{\alpha}}
\leq 2^{\alpha}\cdot 2||D^2v||_{C^0(\mathbb R^n)}\leq C.
$$
Since the linear transform $T$ from Theorem \ref{roughestimate} is invertible, (\ref{HolderRegularity_MA}) follows immediately.

\textbf{Step 2:} prove convergence speed at infinity \eqref{ConvergeRateofHessian}.
Let
$$
w(x):=v(x)-\dfrac{\exp(f(\infty))}{2}|x|^2\quad\text{and}\quad w_R(y):=
\left(\frac{4}{R}\right)^{2} w\left(x+\frac{R}{4} y\right), \quad|y| \leq 2
$$
for $|x|=R\geq 2$. By (\ref{roughestimate_appendix}) in Theorem \ref{roughestimate},
\begin{equation*}
\left\|w_{R}\right\|_{C^0\left(\overline{B_{2}}\right)} \leq C R^{-\varepsilon}.
\end{equation*}
Applying Newton-Leibnitz formula between \eqref{equ-temp-4} and $\operatorname{det}(\exp(f(\infty))I)=\exp(nf(\infty))$,
\begin{equation*}
\widetilde{a_{i j}}(y) D_{i j} w_{R}=f_{ R}(y)-\exp(nf(\infty))\quad \text{in }B_2,
\end{equation*}
where $\widetilde{a_{i j}}(y)=\int_{0}^{1} D_{M_{i j}}(\det \left(I+t D^{2} w_{R}(y)\right)) d t$.

By (\ref{CalphaEstimate}) and (\ref{2.9}), there exists constant $C$ independent of $|x|=R>2$ such that
\begin{equation*}
\frac{I}{C} \leq \widetilde{a_{i j}} \leq C I\quad\text {in } B_{1}, \quad\left\|\widetilde{a_{i j}}\right\|_{C^{ \alpha}\left(\overline{B_{1}}\right)} \leq C.
\end{equation*}
By interior Schauder estimates, see for instance Theorem 6.2 of \cite{GT},
\begin{equation}\label{equ-interiorSchauder}
\begin{array}{llll}
\left\|w_{R}\right\|_{C^{2, \alpha}\left(\overline{B_{\frac{1}{2}}}\right)} &\leq & C\left(\left\|w_{R}\right\|_{C^0\left(\overline{B_{1}}\right)}+\left\|f_{ R}-\exp(nf(\infty))\right\|_{C^{\alpha}\left(\overline{B_{1}}\right)}\right)\\
& \leq & C R^{-\min\{\varepsilon,\zeta,\zeta'\}}.\\
\end{array}
\end{equation}
The result (\ref{ConvergeRateofHessian}) follows immediately by
scaling back.
 \end{proof}

\begin{remark}
  In the proof of Theorem \ref{corollary_estimate3}, the interior Schauder estimates used in \eqref{equ-interiorSchauder} can be replaced by the $W^{2,\infty}$ type estimates (see for instance Remark 1.3 of \cite{Dong-Xu-estimate}),
  $$
  ||w_R||_{W^{2,\infty}(\overline{B_{\frac{1}{2}}})}\leq C \left(
  \left\|w_{R}\right\|_{C^0\left(\overline{B_{1}}\right)}+\left\|f_{ R}-\exp(nf(\infty))\right\|_{C^{\alpha}\left(\overline{B_{1}}\right)}
  \right)\leq C R^{-\min\{\varepsilon,\zeta,\zeta'\}}.
  $$
\end{remark}

\begin{remark}\label{corollary_estimate2}

The condition \eqref{equ-temp-6} in Theorem \ref{corollary_estimate3} holds if
for some $C>0$,
 \begin{equation}\label{1-orderCondition}
|x|^{\zeta}|f(x)-f(\infty)|+|x|^{1+\zeta'}
  \left| Df ( x ) \right|\leq C,\quad\forall~|x|>2.
  \end{equation}

Even if $f(x)$ is $C^1$, condition \eqref{equ-temp-6} is weaker than (\ref{1-orderCondition}).
For example,  we consider $f(x):=e^{-|x|}\sin(e^{|x|})$. On the one hand,  $Df(x)$ doesn't admit a limit at infinity, hence $f$ doesn't satisfy condition (\ref{1-orderCondition}).
On the other hand,
for any
$|x|=R>1$ and
$z_1,z_2\in B_{\frac{|x|}{2}}(x)$,
$$
\begin{array}{lll}
\dfrac{\left|f\left(z_{1}\right)-f\left(z_{2}\right)\right|}{\left|z_{1}-z_{2}\right|^{\alpha}}
&\leq & e^{-|z_2|}\dfrac{\left|\sin(e^{|z_1|})-\sin(e^{|z_2|})\right|}{\left|z_{1}-z_{2}\right|^{\alpha}}
+ \sin(e^{|z_1|}) \dfrac{\left|e^{-|z_1|}-e^{-|z_2|}\right|}{\left|z_{1}-z_{2}\right|^{\alpha}}\\
&\leq &\displaystyle C e^{-\frac{R}{2}}\cdot \frac{\left|z_{1}-z_{2}\right|}{\left|z_{1}-z_{2}\right|^{\alpha}} \\
&\leq & Ce^{-\frac{R}{2}}\cdot R^{1-\alpha}\\
\end{array}
$$
for constant $C$ independent of $R$. Hence $f$ satisfies condition \eqref{equ-temp-6} for all $\alpha\in(0,1)$ and any $\zeta,\zeta'>0$.
\end{remark}

This finishes the proof of Theorem \ref{thm-sec2} for $\tau=0$ case.

\subsection{$\tau\in(0,\frac{\pi}{4})$ case}

In this subsection, we deal with $\tau\in(0,\frac{\pi}{4})$ case by Legendre transform and the results in previous subsection.

Let
$f\in C^{\alpha}(\mathbb R^n)$  satisfy \eqref{equ-temp-6}  for some $0<\alpha<1, \zeta>1$, $\zeta'>0$ and $u\in C^{2,\alpha}(\mathbb R^n)$ be a classical solution of \eqref{Equ-exterior} satisfying \eqref{case-small}.
Let
$$
\overline{u}(x):=u(x)+\dfrac{a+b}{2}|x|^2,
$$
then
\begin{equation}\label{equ-star-1}
D^{2} \overline{u}=D^{2} u+(a+b) I>2bI\quad\text{in }\mathbb{R}^n.
\end{equation}
Let $(\widetilde{x},v)$ be the Legendre transform of $(x,\overline{u})$, i.e.,
\begin{equation}\label{LegendreTransform2}
\left\{
\begin{array}{ccc}
  \widetilde{x}:=D\overline{u}(x),\\
  Dv(\widetilde{x}):=x,\\
\end{array}
\right.
\end{equation}
and  we have
\begin{equation*}
D ^{2} v(\widetilde{x})=\left(D^{2} \overline{u}(x)\right)^{-1}=(D^2u(x)+(a+b)I)^{-1}<\frac{1}{2b}I.
\end{equation*}
Let \begin{equation}\label{LegendreTransform}
\bar v(\widetilde{x}):=\dfrac{1}{2}|\widetilde{x}|^2-2bv(\widetilde{x}).
\end{equation}
By a direct computation, $D\bar u(\mathbb R^n)=\mathbb R^n$ and
\begin{equation}\label{property-Legendre}
\widetilde{\lambda}_{i}\left(D^{2} \bar v\right)=1-2 b \cdot \frac{1}{\lambda_{i}+a+b}=\frac{\lambda_{i}+a-b}{\lambda_{i}+a+b}\in (0,1).
\end{equation}
Thus $\bar v(\widetilde{x})$ satisfies the following Monge-Amp\`ere type equation
\begin{equation}\label{temp-16}
\operatorname{det} D^{2} \bar v=\exp \left\{\frac{2 b}{\sqrt{a^{2}+1}} f\left(\frac{1}{2 b}(\widetilde{x}-D \bar v(\widetilde{x}))\right)\right\}=: g(\widetilde{x})\quad \text{in }\mathbb{R}^n.
\end{equation}

\textbf{Step 1:} There exists $C_0>1$ such that
\begin{equation}\label{linear-of-X}
\frac{1}{C_0}|x|\leq |\widetilde{x}|\leq C_0|x|,\quad\forall~|x|> 1.
\end{equation}
We prove the two inequalities in \eqref{linear-of-X} separately.

By the definition of $\widetilde x=D\bar u(x)$ and \eqref{equ-star-1},
\begin{equation*}
|\widetilde{x}-\widetilde{0}|=|D \bar u(x)-D \bar u(0)|>2 b|x|.
\end{equation*}
Hence by triangle inequality,
\begin{equation}\label{limitofX}
|\widetilde{x}|\geq
-|\widetilde{0}|+|\widetilde{x}-\widetilde{0}|
> -|\widetilde{0}|+2b|x|,
\end{equation}
and the first inequality of \eqref{linear-of-X} follows immediately.

 By the quadratic growth condition in \eqref{Condition-QuadraticGrowth-1}, we prove the linear growth result of $Du(x)$.  In fact, for any $|x|\geq 1$, let
 $e:=\frac{Du(x)}{|Du(x)|}\in\partial B_1$. By Newton-Leibnitz formula and \eqref{Condition-QuadraticGrowth-1}
 \begin{equation}\label{gradientestimate}
 \begin{array}{lllll}
 u(x+|x|e)&= &\displaystyle u(x)+ \int_0^{|x|}e\cdot Du(x+se)\mathtt{d}s\\
 &=& \displaystyle u(x)+\int_0^{|x|}\int_0^se\cdot D^2u(x+te)\cdot e\mathtt{d}t\mathtt{d} s +\int_0^{|x|}e\cdot Du(x)\mathtt{d}s\\
 &\geq & \displaystyle u(x)+\frac{(-a+b)}{2}|x|^2+|Du(x)|\cdot |x|.\\
 \end{array}
 \end{equation}
 Furthermore by \eqref{Condition-QuadraticGrowth-1}, there exists $C>0$ independent of $|x|\geq 1$ such that
 $$
 |Du(x)|\leq \dfrac{1}{|x|} \left(
 C(1+\left|(x+|x|e)\right|^2)+C(1+|x|^2)+\frac{a-b}{2}|x|^2
  \right)
 \leq C(1+|x|).
 $$
 Hence there exists $C>0$ such that
\begin{equation}\label{Condition-LinearGrowth}
|Du(x)|\leq C(1+|x|),\quad \forall~x\in\mathbb{R}^n.
\end{equation}
By \eqref{Condition-LinearGrowth}, there exists $C>0$ such that
$$
|\widetilde x|=|D u(x)+(a+b) x| \leq|D u(x)|+(a+b)|x| \leq C(|x|+1).
$$
The second inequality of \eqref{linear-of-X} follows immediately.

Now we study   equation \eqref{temp-16} by applying Theorem \ref{corollary_estimate3} and Remark \ref{corollary_estimate2}, which require a knowledge on the asymptotic behavior of $g(\widetilde x)$.

\textbf{Step 2: }$g(\widetilde{x})$ satisfies condition \eqref{equ-temp-6}.
By the equivalence (\ref{linear-of-X}), $$
  \lim_{\widetilde{x}\rightarrow\infty}g(\widetilde{x})=\exp\left\{\frac{2 b}{\sqrt{a^{2}+1}}f(\infty)\right\}=:g(\infty)\in(0,1].
  $$
  By a direct computation,
   $$
   \begin{array}{lllll}
  &\displaystyle
  |\widetilde{x}|^{\zeta}|g(\widetilde{x})-g(\infty)|\\
  =& \displaystyle e^{\frac{2 b}{\sqrt{a^{2}+1}}f(\infty)}
  \dfrac{ |\widetilde{x}|^{\zeta}}{\left|\frac{\widetilde{x}-D\bar v(\widetilde{x})}{2b}\right|^{\zeta}}
  \cdot\left|\frac{\widetilde{x}-D\bar v(\widetilde{x})}{2b}\right|^{\zeta}
  \cdot \left|
  e^{\frac{2b}{\sqrt{a^2+1}}(f(\frac{\widetilde{x}-D\bar v(\widetilde{x})}{2b})-f(\infty))}-1
  \right|
  \\
  \leq &
  \displaystyle C |x|^{\zeta}\left|
  e^{\frac{2b}{\sqrt{a^2+1}}(f(x)-f(\infty))}-1
  \right|
  \\
  \leq &\displaystyle C
  |x|^{\zeta}\left|f(x)-f(\infty)\right|
  <C.\\
  \end{array}
  $$
  For any $\widetilde  y,\widetilde  z\in B_{\frac{|\widetilde  x|}{2}\cdot 2b}(\widetilde  x), \widetilde  y\not=\widetilde  z$ with   $|\widetilde  x|> C_0$, by \eqref{equ-star-1} we have
  $$
  y,z\in B_{\frac{|x|}{2}}(x),\quad
  |\widetilde  y-\widetilde  z|\geq 2b|y-z|>0\quad\text{and}\quad y\not=z.
  $$
  Thus by condition \eqref{equ-temp-6},
  \begin{equation}\label{equ-temp-10}
  \dfrac{
  |g(\widetilde  y)-g(\widetilde  z)|
  }{|\widetilde  y-\widetilde  z|^{\alpha}}\leq (2b)^{-\alpha}\dfrac{\exp\{\frac{2b}{\sqrt {a^2+1}}f(y)\}
  -\exp\{\frac{2b}{\sqrt {a^2+1}}f(z)\}
  }{|y-z|^{\alpha}}\leq C[f]_{C^{\alpha}(\overline{B_{\frac{|x|}{2}}(x)})}.
  \end{equation}
  Thus $g(\widetilde x)$ satisfies
  \eqref{equ-temp-6} for $0<\alpha<1, \zeta>1$ and $\zeta'>0$ as given.

By Theorem \ref{corollary_estimate3}, we have
$$
||D^2\bar v||_{C^{\alpha}(\mathbb R^n)}\leq C
$$
and
\begin{equation}\label{equ-temp-1}
\bar v-\left(\frac{1}{2}\widetilde  x^T\widetilde  A\widetilde  x+\widetilde  b\cdot \widetilde  x+\widetilde  c\right)=O_2(|\widetilde x|^{2-\epsilon})
\end{equation}
for some $0<\widetilde{A}  \in \mathtt{Sym}(n)$ satisfying $\det \widetilde{A}=g(\infty)$, $\widetilde b\in\mathbb R^n, \widetilde  c\in \mathbb R$ and $C,\epsilon > 0$.

\textbf{Step 3: } we finish the proof of Theorem \ref{thm-sec2} \eqref{case-2.1-2}.
By strip argument as in \cite{ExteriorLiouville,bao-liu-2020} etc, we prove that $I-\widetilde{A}$ is invertible. In fact,
by \eqref{property-Legendre}, $\widetilde A\leq I$ and it remains to prove $\lambda_i(\widetilde A)<1$ for all $i=1,2,\cdots,n$.
Arguing by contradiction and rotating the $\widetilde{x}$-space to make $\widetilde{A}$ diagonal, we may assume that $\widetilde{A}_{11}=1$. By \eqref{equ-temp-1} with the definition of Legendre transform (\ref{LegendreTransform}) and (\ref{limitofX}), there exists $\widetilde {b_1}$ such that
  \begin{equation}\label{strip-argument}
  x_1=D_1v(\widetilde  x)=\widetilde {b_1}+O(|\widetilde  x|^{1-\epsilon})
  \quad\text{as }|\widetilde  x|\rightarrow\infty.
\end{equation}
This becomes a contradiction to \eqref{linear-of-X}.

Let $$
A:=2b\left( I - \widetilde{A} \right)   ^ { - 1 } - ( a + b ) I.
$$
By a direct computation, $F_{\tau}(\lambda(A))=f(\infty)$ and $$
 \begin{array}{llll}
\left| D ^ { 2 } u(x) - A \right|& =&2b\left|
\left( I - D ^ { 2 } \bar v ( \widetilde { x } )\right) ^ { - 1 }-
\left( I - \widetilde{A} \right) ^ { - 1 }
\right|\\
&\leq &C|D^2\bar v(\widetilde x)-\widetilde{A}|\\
&
\leq& \dfrac{C}{|\widetilde{x}|^{\epsilon}}\quad\forall~|x|\geq 1.\\
\end{array}
$$
By the equivalence (\ref{linear-of-X}),
we have
  \begin{equation}\label{Result_LimitofHessian}
  \left| D ^ { 2 } u ( x ) - A \right| \leq \frac { C } { | x | ^ {\epsilon} } ,\quad\forall ~| x | \geq 1.
  \end{equation}
Furthermore,
  by (\ref{LegendreTransform}), for any $x,y\in\mathbb{R}^n$,
\begin{equation*}
\left|D^{2} u(x)-D^{2} u(y)\right|=2 b\left|\left(I-D^{2} \bar v(\widetilde{x})\right)^{-1}-\left(I-D^{2} \bar v(\widetilde{y})\right)^{-1}\right|.
\end{equation*}
By \eqref{Result_LimitofHessian}, $D^2\bar v(\widetilde x)$ is bounded away from $0$ and $I$, it follows that $\exists~ C>0$ such that
\begin{equation}\label{equivalentHessian}
\left|D^{2} u(x)-D^{2} u(y)\right| \leq 2 b C\left|D^{2} \bar v(\widetilde{x})-D^{2} \bar v(\widetilde{y})\right|
\end{equation}
Combining (\ref{equivalentHessian}) and the equivalence (\ref{linear-of-X}),
$D^2u$ has bounded $C^{\alpha}$ norm.

So far, we
 finished the proof of Theorem \ref{thm-sec2} for $\tau\in (0,\frac{\pi}{4})$ case.

\subsection{$\tau=\frac{\pi}{4}$ case}

In this subsection, we deal with $\tau=\frac{\pi}{4}$ case by Legendre transform and analysis on the  Poisson equations.

Let $f\in C^{\alpha}(\mathbb R^n)$ satisfy \eqref{equ-temp-6} for some $0<\alpha<1,\zeta,\zeta'>0$  and
$u\in C^{2,\alpha}(\mathbb R^n)$ be a classical solution  of \eqref{Equ-exterior} satisfying \eqref{case-inverse}.
Let $$
\overline{u}(x):=u(x)+\dfrac{1}{2}|x|^2,
$$
then $D^2\overline u>0$ in $\mathbb R^n$.
By equation \eqref{Equ-exterior}, for all $i=1,2,\cdots,n$,
$$
-\dfrac{1}{\lambda_i(D^2\bar u)}\geq -\sum_{j=1}^n\dfrac{1}{\lambda_j(D^2\bar u)}\geq \frac{\sqrt 2}{2}\inf_{\mathbb R^n}f.
$$
Thus
 there exists $\delta>0$ such that
$$
D^2\overline u(x)>\delta I,\quad\forall~x\in\mathbb R^n.
$$
Let $(\widetilde{x},v)$ be the Legendre transform of $(x,\overline{u})$ as in \eqref{LegendreTransform2}
and we have
\begin{equation*}
0<D ^2v(\widetilde{x})=(D^2\overline{u}(x))^{-1}<\dfrac{1}{\delta}I.
\end{equation*}
By a direct computation, $D\bar u(\mathbb R^n)=\mathbb R^n$ and $v(\widetilde{x})$ satisfies the following Poisson equation \begin{equation}\label{temp-modify3}
\Delta v=-\frac{\sqrt{2}}{2}f(Dv(\widetilde  x))=:g(\widetilde{x})\quad\text{in }\mathbb R^n.
\end{equation}
\textbf{Step 1:} There exists $C_0>1$ such that \eqref{linear-of-X} holds. The proof is separated into two parts similarly.

By the definition of Legendre transform in \eqref{LegendreTransform2},
$$
|\widetilde x-\widetilde 0|=|D\bar u(x)-D\bar u(0)|>\delta |x|.
$$
Hence by triangle inequality,
$$
|\widetilde x|\geq -|\widetilde 0|+|\widetilde x-\widetilde 0|>-|\widetilde 0|+\delta|x|
$$
and the first inequality of \eqref{linear-of-X} follows immediately.
The second inequality of \eqref{linear-of-X} follows similarly by \eqref{Condition-QuadraticGrowth-2} and
 \eqref{gradientestimate}.

 \textbf{Step 2:} Asymptotic behavior of $g(\widetilde x)$ at infinity. By  the equivalence \eqref{linear-of-X},
$$
g(\widetilde  x)=-\frac{\sqrt 2}{2}f(x)\rightarrow -\frac{\sqrt 2}{2}f(\infty)=:g(\infty)
$$
as $|\widetilde  x|\rightarrow+\infty$. Similar to the proof of \eqref{equ-temp-10},
we have
$$
\limsup_{|\widetilde x|\rightarrow+\infty}\left(|\widetilde x|^{\zeta}|g(\widetilde x)-g(\infty)|+|\widetilde x|^{\alpha+\zeta'}[g]_{C^{\alpha}(\overline{B_{\frac{|\widetilde  x|}{2}}(\widetilde  x)})}\right)<\infty
$$
for the give $0<\alpha<1$, $\zeta,\zeta'>0$.

 \textbf{Step 3:} Asymptotic behavior of $v(\widetilde x)$ at infinity.

Since \eqref{equ-temp-6} remains when $\zeta>0$ becomes smaller, we only need to prove for $0<\zeta<2$ case for reading simplicity.
By a direct computation,  $\Delta |x|^{2-\zeta}=c_{n,\zeta}|x|^{-\zeta}$ in $B_1^c$.
Thus there exist subsolution $\underline v$ and supersolution $\overline v$ of
Poisson equation
\begin{equation}\label{equ-Poisson}
\Delta \widetilde  v=g(\widetilde x)-g(\infty)\quad\text{in }\mathbb R^n
\end{equation}
with
$
\underline{v},\overline v=O(|\widetilde x|^{2-\zeta})$ as $|x|\rightarrow\infty.
$
By Perron's method (see for instance \cite{BLZ,UserGuide-intro,ExteriorDirichlet}) and interior regularity, we have a classical solution $\widetilde v\in C^{2,\alpha}(\mathbb R^n)$ of \eqref{equ-Poisson} with $\widetilde v=O(|\widetilde x|^{2-\zeta})$ as $|\widetilde x|\rightarrow\infty$.

For any $|\widetilde  x|=R\geq 1$, let
$$
 \widetilde  v_R(y):=\left(\frac{2}{R}\right)^2  \widetilde v(\widetilde  x+\frac{R}{2}y),\quad y\in B_1.
$$
Then $ \widetilde  v_R$ satisfies
$$
\Delta \widetilde  v_R=g(\widetilde  x+\frac{R}{2}y)-g(\infty)=:g_R(y)\quad\text{in }B_1.
$$
By a direct computation,
$$
||g_R||_{C^{\alpha}(\overline{B_1})}\leq CR^{-\min\{\zeta,\zeta'\}}
\quad\text{and}\quad
|| \widetilde  v_R||_{C^0(\overline{B_1})}\leq CR^{-\zeta}.
$$
By interior Schauder estimates, we have
$$
||\widetilde v_R||_{C^{2,\alpha}(\overline{B_{1/2}})}\leq CR^{-\min\{\zeta,\zeta'\}}
$$
and then
$$
\widetilde v(\widetilde x)=O_2(|\widetilde x|^{2-\min\{\zeta,\zeta'\}})
$$
as $|\widetilde x|\rightarrow\infty$.
Then
$$
\Delta (v-\widetilde v)=g(\infty)\quad\text{in }\mathbb R^n
$$
and $D^2(v-\widetilde  v)$ is bounded.  By Liouville type theorem, $v-\widetilde v$ is a quadratic function and hence
$$
v-\left(
\frac{1}{2}\widetilde x^T\widetilde A\widetilde x+\widetilde b \widetilde x+\widetilde c
\right)=O_2(|\widetilde x|^{2-\min\{\zeta,\zeta'\}})
$$
for some  $\widetilde  A\in\mathtt{Sym}(n)$ with $\mathtt{trace}\widetilde  A=g(\infty)$,  $\widetilde b\in\mathbb R^n$ and $\widetilde c\in\mathbb R$.
Similarly we have  \eqref{strip-argument} and $\widetilde A$ is invertible. Taking $A:=\widetilde A^{-1}-I$ and the result follows similar to  $\tau\in(0,\frac{\pi}{4})$ case.

\section{Asymptotics  of solutions of \eqref{Equ-exterior}}\label{sec-linear}

In this section, we
prove Theorem \ref{Thm-firstExpansion}. As an integral part of the preparation, we
 analyze the linearized equation of \eqref{Equ-exterior} and obtain the asymptotic behavior at infinity. The major difficulty is that the linearized equation is not homogeneous.

\subsection{Asymptotics of solutions of nonhomogeneous linear elliptic equations}

Consider the linear elliptic equation
  \begin{equation}\label{Dirichlet}
  Lu:=a_{i j}(x) D_{i j} u(x)=f(x)\quad\text{in }\mathbb R^n,
  \end{equation}
 where the coefficients are  uniformly elliptic,
  satisfying \begin{equation}\label{HolderCoefficient}
  ||a_{ij}||_{C^{\alpha}(\mathbb{R}^n)}<\infty,
  \end{equation}
  for some $0<\alpha <1$
  and
  \begin{equation}\label{short-RangeCoefficient}
  |a_{ij}(x)-a_{ij}(\infty)|\leq C|x|^{-\varepsilon},
  \end{equation}
  for some $0<(a_{ij}(\infty))\in\mathtt{Sym}(n)$ and $\varepsilon,C>0$.

\begin{theorem}\label{exteriorLiouville}
  Let $v$ be a  classical solution of \eqref{Dirichlet} that bounded from at least one side, the coefficients satisfy \eqref{HolderCoefficient} and (\ref{short-RangeCoefficient}) and $f\in C^{0}(\mathbb{R}^n)$ satisfy
  \begin{equation}
  \label{decayoff_1}
\limsup_{|x|\rightarrow+\infty} |x|^{\zeta}|f(x)|<\infty
  \end{equation}
  for some $\zeta>2$.
  Then there exists a constant $v_{\infty}$ such that \begin{equation}\label{Result_ExteriorLiouville-2}
  v ( x ) = v _ { \infty } +
  \left\{
  \begin{array}{llll}
  O \left( |x|^{2-\min\{n,\zeta\}} \right), & \zeta\not=n,\\
  O \left( |x|^{2-n}(\ln|x|) \right), & \zeta=n,\\
  \end{array}
  \right.
  \end{equation}
  as $|x|\rightarrow\infty$.
\end{theorem}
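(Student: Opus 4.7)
My plan is to construct an explicit particular solution $w$ of $Lw=f$ on $\mathbb{R}^n$ that carries the asymptotic decay prescribed in \eqref{Result_ExteriorLiouville-2}, and then to identify $v-w$ as a constant by a Liouville-type theorem for $L$. After a linear change of coordinates turning $(a_{ij}(\infty))$ into the identity matrix, \eqref{short-RangeCoefficient} says that $L$ agrees with $\Delta$ at infinity up to a perturbation of order $|x|^{-\varepsilon}$, so radial test functions of the form $|x|^{2-\zeta}$ behave under $L$ essentially as under $\Delta$. One computes
\[
L(|x|^{2-\zeta})=(2-\zeta)(n-\zeta)|x|^{-\zeta}+O(|x|^{-\zeta-\varepsilon}),\qquad L(|x|^{2-n}\ln|x|)=(2-n)|x|^{-n}+O(|x|^{-n-\varepsilon})
\]
as $|x|\to\infty$. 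Combined with \eqref{decayoff_1}, this makes $\pm c|x|^{2-\zeta}$ (for $2<\zeta<n$), $\pm c|x|^{2-n}\ln|x|$ (for $\zeta=n$), and the Newtonian potential $\Gamma\ast f$ (for $\zeta>n$) legitimate sub-/super-solutions of $Lw=f$ outside a large ball.

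The main step is to produce $w\in C^{2,\alpha}_{\rm loc}(\mathbb{R}^n)$ solving $Lw=f$ globally, with the decay of \eqref{Result_ExteriorLiouville-2}, by Perron's method exactly in the spirit of Subsection~2.3. I would solve the Dirichlet problem for $L$ on $B_R\setminus\overline{B_1}$ with zero boundary data, trap the Perron solutions uniformly in $R$ between the barriers above together with an interior bound on $|w|$ from local $W^{2,p}$ or Schauder theory (where $f$ is Hölder), and pass to the limit $R\to\infty$ to extract a $C^{2,\alpha}_{\rm loc}$ solution $w$ on $\mathbb{R}^n$. The sandwich with the barriers forces $w$ to inherit the rate of \eqref{Result_ExteriorLiouville-2}, and in particular $w$ is bounded on $\mathbb{R}^n$ in all three cases $\zeta\gtreqless n$.

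Once $w$ is in hand, $u:=v-w$ satisfies $Lu=0$ on $\mathbb{R}^n$. Since $w$ is bounded and $v$ is bounded from one side, so is $u$; set $m:=\inf_{\mathbb{R}^n}u>-\infty$. Then $u-m\ge 0$ solves $L(u-m)=0$, so the Krylov--Safonov Harnack inequality applied on $B_{2R}$ yields $\sup_{B_R}(u-m)\le C\inf_{B_R}(u-m)$ with $C$ independent of $R$. Sending $R\to\infty$ forces $\inf_{B_R}(u-m)\to 0$, hence $\sup_{\mathbb{R}^n}(u-m)=0$ and $u\equiv m$. Absorbing $\lim_{|x|\to\infty}w(x)$ (which is $0$ when $\zeta\ge n$) into the constant, this gives \eqref{Result_ExteriorLiouville-2} with $v_\infty=m+\lim_{|x|\to\infty}w$.

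The main obstacle is the Perron construction: ensuring that the annular solutions stay uniformly pinched between $\pm c|x|^{2-\min\{n,\zeta\}}$ (with the appropriate log correction when $\zeta=n$) rests on the sign computation above, which is delicate precisely at the borderline $\zeta=n$ where the pure power $|x|^{2-n}$ is harmonic and the $\ln|x|$ barrier is indispensable. A secondary subtlety is extracting a $C^{2,\alpha}_{\rm loc}$ limit under the mild hypothesis $f\in C^0(\mathbb{R}^n)$, which forces the interior regularity step to use $C^{1,\gamma}$ or $W^{2,p}$ estimates away from the interior of the compact set where $f$ is only continuous, rather than global Schauder.
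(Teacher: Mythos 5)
Your overall strategy — build a particular solution $w$ of $Lw=f$ with the prescribed decay, then identify $v-w$ as a constant via a Liouville-type argument — matches the paper's in outline, but the two constructions of $w$ are genuinely different and yours has a gap at $\zeta>n$. The paper (Lemma \ref{existence}) takes $w(x)=\int_{\mathbb{R}^n} G_L(x,y)f(y)\,\mathtt{d}y$, invoking the criterion of \cite{Equivalence} for the two-sided bound $C^{-1}|x-y|^{2-n}\le G_L(x,y)\le C|x-y|^{2-n}$ under \eqref{HolderCoefficient}--\eqref{short-RangeCoefficient}, and then splits the integral over $E_1,E_2,E_3$ to read off the decay $O(|x|^{2-\min\{n,\zeta\}})$ (with the $\ln$ factor at $\zeta=n$) in a single stroke for every $\zeta>2$. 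You replace this with Perron's method and explicit radial barriers. That works cleanly for $2<\zeta<n$, where $(2-\zeta)(n-\zeta)\neq 0$, and for $\zeta=n$ with the $\ln$-corrected barrier. Your Liouville step — take $m=\inf u$, apply Krylov--Safonov Harnack to $u-m\ge 0$ on $B_{2R}$ with scale-invariant constant, and let $R\to\infty$ — is correct, self-contained, and in fact sharper than the paper's appeal to Theorem~2.2 of \cite{ExteriorLiouville}: it forces $v-w$ to be \emph{exactly} a constant.

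The genuine gap is in the case $\zeta>n$. You write that ``the Newtonian potential $\Gamma\ast f$ (for $\zeta>n$)'' is a ``legitimate sub-/super-solution of $Lw=f$ outside a large ball.'' This is not true: $\Gamma\ast f$ satisfies $\Delta(\Gamma\ast f)=f$, hence
\[
L(\Gamma\ast f)-f=(a_{ij}-\delta_{ij})D_{ij}(\Gamma\ast f),
\]
and this remainder, though small (of order $|x|^{-n-\varepsilon}$), has no definite sign, so $\Gamma\ast f$ is neither a subsolution nor a supersolution of $Lw=f$. If you fall back on the barrier $c|x|^{2-n}\ln|x|$ for all $\zeta\ge n$, the sandwich yields $w=O(|x|^{2-n}\ln|x|)$, which is strictly weaker than the asserted $O(|x|^{2-n})$ when $\zeta>n$; a pure power $c|x|^{2-n-\delta}$ with $0<\delta<\min\{\varepsilon,n-2\}$ is an admissible supersolution but only gives $O(|x|^{2-n-\delta+(n-2)})$-type bounds that are again off by the missing $\delta$. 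To recover the sharp rate you either need the Green's-function estimate the paper uses, or an explicit decomposition $w=\Gamma\ast f+z$ where $Lz=-(a_{ij}-\delta_{ij})D_{ij}(\Gamma\ast f)$, followed by a careful (and not obviously convergent) iteration; neither is supplied. A secondary, fixable issue: you perform Perron on the annuli $B_R\setminus\overline{B_1}$, which in the limit only produces $w$ on $\mathbb{R}^n\setminus\overline{B_1}$, whereas the Harnack step requires $u=v-w$ on all of $\mathbb{R}^n$; you should either solve on full balls $B_R$ (adjusting the barriers near the origin by an additive constant) or replace the entire-space Harnack argument by the exterior-domain Liouville result the paper cites.
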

The homogeneous version of Theorem \ref{exteriorLiouville} has been proved earlier,
 see for instance Gilbarg-Serrin \cite{Gilbarg-Serrin} and  Li-Li-Yuan \cite{ExteriorLiouville}. Hence we start with constructing a special solution of \eqref{Dirichlet} and translate the question into homogeneous case.

By  the  criterion in \cite{Equivalence}, the Green's function of operator $L$ is equivalent to the Green's function of Laplacian under conditions \eqref{HolderCoefficient} and \eqref{short-RangeCoefficient}. More precisely, let $G_L(x,y)$ be the Green's function centered at $y$ ,   there exists constant $C$ such that
\begin{equation}\label{Equivalence-Green}
\begin{array}{llll}
C^{-1}|x-y|^{2-n} \leq G_{L}(x, y) \leq C|x-y|^{2-n},& \forall~ x\not=y,\\
\left|D_{x_{i}} G_{L}(x, y)\right| \leq C|x-y|^{1-n}, \quad i=1, \cdots, n,& \forall~ x\not=y,\\
\left|D_{x_{i}}D_{x_{j}} G_{L}(x, y)\right| \leq C|x-y|^{-n}, \quad i, j=1, \cdots, n,& \forall~ x\not=y.
\end{array}
\end{equation}
By an elementary estimate as in Bao-Li-Zhang \cite{BLZ}, we construct a solution that vanishes at infinity. More rigorously, we introduce the following result.
\begin{lemma}\label{existence}
  There exists a bounded strong solution $u\in W^{2,p}_{loc}(\mathbb R^n)$ with $p>n$ of  (\ref{Dirichlet}) satisfying  $$
  u(x)=
  \left\{
\begin{array}{lllll}
O(|x|^{2-\min\{n,\zeta\}}), & \zeta\not=n,\\
O(|x|^{2-n}(\ln|x|)), & \zeta =n,\\
\end{array}
\right.
  $$
  as $|x|\rightarrow\infty$.
\end{lemma}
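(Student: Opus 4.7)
The plan is to represent the desired solution as a Green's function integral
\[
u(x) = -\int_{\mathbb{R}^n} G_L(x,y)\, f(y)\, dy,
\]
and then read off both the existence of a bounded strong solution and its decay rate at infinity directly from the two-sided Green's function bounds \eqref{Equivalence-Green} combined with the decay hypothesis \eqref{decayoff_1}. Convergence of the integral follows because $|f(y)|\le C(1+|y|)^{-\zeta}$ with $\zeta>2$ makes the integrand globally integrable: the local singularity $|x-y|^{2-n}$ is integrable in dimension $n$, while at infinity the integrand is dominated by $|y|^{2-n-\zeta}$, which is integrable since $\zeta>2$.

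The key step is the pointwise decay estimate. For $|x|$ large I would split $\mathbb{R}^n$ into three standard regions
\[
B_1 = B_{|x|/2}(x),\qquad B_2 = B_{|x|/2}(0),\qquad B_3 = \mathbb{R}^n \setminus (B_1\cup B_2),
\]
and estimate each piece using \eqref{Equivalence-Green}.  On $B_1$ we have $|y|\ge |x|/2$, hence $|f(y)|\le C|x|^{-\zeta}$, and $\int_{B_1}|x-y|^{2-n}\,dy \le C|x|^2$, giving a contribution $O(|x|^{2-\zeta})$.  On $B_2$ we have $|x-y|\ge |x|/2$, so $G_L(x,y)\le C|x|^{2-n}$, and
\[
\int_{B_2} (1+|y|)^{-\zeta}\, dy =
\begin{cases}
O(|x|^{n-\zeta}), & \zeta<n,\\
O(\ln|x|), & \zeta=n,\\
O(1), & \zeta>n,
\end{cases}
\]
which produces the three cases in the conclusion.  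On $B_3$ one has $|x-y|\sim |y|\ge |x|/2$, so the contribution is bounded by $C\int_{|x|/2}^\infty r^{1-\zeta}\,dr = O(|x|^{2-\zeta})$ using $\zeta>2$.  Collecting the three pieces yields exactly
\[
|u(x)| \le
\begin{cases}
C|x|^{2-\min\{n,\zeta\}}, & \zeta\ne n,\\
C|x|^{2-n}(\ln|x|), & \zeta=n,
\end{cases}
\]
for $|x|$ large, and hence $u$ is bounded on all of $\mathbb{R}^n$ (its size on bounded sets is controlled by the integrability of $|x-y|^{2-n}$ against the bounded function $f$).

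For the regularity claim, I would observe that $f\in L^\infty(\mathbb{R}^n)$ by \eqref{decayoff_1} and $a_{ij}\in C^\alpha(\mathbb{R}^n)$ by \eqref{HolderCoefficient}, so interior $L^p$ estimates (Calderón–Zygmund) applied to the equation $Lu=f$ give $u\in W^{2,p}_{\mathrm{loc}}(\mathbb{R}^n)$ for every $p<\infty$, and in particular for $p>n$.  The differentiation under the integral defining $u$ can be justified rigorously using the gradient and second-derivative bounds in \eqref{Equivalence-Green}; alternatively one can obtain $u$ first as a limit of solutions to the Dirichlet problem on $B_R$ with zero boundary data (using standard existence theory for strictly elliptic equations with Hölder coefficients) and then pass to the limit using the uniform bounds just derived.

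The main obstacle I expect is keeping the integral estimates honest at the borderline between the three regions, and in particular tracking where the logarithm in the case $\zeta=n$ comes from (it arises solely from $B_2$, the integration over $\{|y|\le |x|/2\}$, because the radial integral $\int_1^{|x|/2} r^{n-1-\zeta}\,dr$ produces a logarithm precisely when $\zeta=n$).  The rest is a matter of bookkeeping; no real analytic difficulty beyond applying \eqref{Equivalence-Green} carefully.
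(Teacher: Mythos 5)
Your proof is correct and follows essentially the same route as the paper: represent the solution as a Green's function integral, invoke the two-sided bounds \eqref{Equivalence-Green}, split $\mathbb{R}^n$ into the same three regions (your $B_1,B_2,B_3$ are the paper's $E_2,E_1,E_3$), and track the logarithm at $\zeta=n$ from the ball centered at the origin. The only minor variation is on the far region, where you observe directly that $|x-y|\sim|y|$ while the paper further subdivides into $E_3^+,E_3^-$ according to whether $|x-y|\ge|y|$; both yield the same $O(|x|^{2-\zeta})$ bound.
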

\begin{proof}
 By \eqref{Equivalence-Green} and  Calder\'on-Zygmund inequality,
 $$
w(x):=\int_{\mathbb{R}^n}G_L(x,y)f(y)\mathtt{d}y
$$ belongs to $W^{2,p}_{loc}(\mathbb{R}^n)$ for $p>n$ and is a strong solution of \eqref{Dirichlet} (see for instance \cite{Adams,Ziemer}). It remains to compute the vanishing speed at infinity.
Let $$
\begin{array} { l } { E _ { 1 } : = \left\{ y \in \mathbb { R } ^ { n } , \quad | y | \leq | x | / 2  \right\} ,} \\ { E _ { 2 } : = \left\{ y \in \mathbb { R } ^ { n }  , \quad | y - x | \leq | x | / 2  \right\}, } \\ { E _ { 3 }: =  \mathbb { R } ^ { n }  \backslash \left( E _ { 1 } \cup E _ { 2 } \right) .} \end{array}
$$
By a direct computation,
$$
 \int_{E_1}\dfrac{1}{|x-y|^{n-2}}f(y)\mathtt{d}y
\leq  C\int_{B_{\frac{|x|}{2}}}f(y)\mathtt{d}y\cdot |x|^{2-n}
\leq \left\{
\begin{array}{lllll}
C|x|^{2-\min\{n,\zeta \}}, & \zeta\not=n,\\
C|x|^{2-n}(\ln|x|),& \zeta =n.\\
\end{array}
\right.
$$
Similarly, we have  $\frac{|x|}{2}\leq |y|$ in $E_2$ and hence
\begin{equation*}
 \int_{E_2}\dfrac{1}{|x-y|^{n-2}}f(y)\mathtt{d}y
\leq  C
  \int_{|x-y|\leq \frac{|x|}{2}}\dfrac{1}{|x-y|^{n-2}}\mathtt{d}y
  \cdot \dfrac{1}{|x|^{\zeta}}\leq C |x|^{2-\zeta}.
\end{equation*}
Now we separate $E_3$ into two parts
$$
E_3^+:=\{y\in E_3:|x-y|\geq |y|\},\quad E_3^-:=E_3\setminus E_3^+.
$$
Then
$$
\int_{E_3^+}\dfrac{1}{|x-y|^{n-2}\cdot|y|^{\zeta}}\mathtt{d}y
\leq \int_{|y|\geq \frac{|x|}{2}}
\dfrac{1}{|y|^{n+\zeta-2}}\mathtt{d}y\leq C|x|^{2-\zeta}
$$
and
$$
\int_{E_3^-}\dfrac{1}{|x-y|^{n-2}\cdot|y|^{\zeta}}\mathtt{d}y
\leq \int_{|y-x|\geq \frac{|x|}{2}}
\dfrac{1}{|y-x|^{n+\zeta -2}}\mathtt{d}y\leq C|x|^{2-\zeta}.
$$
 Hence there exists $C>0$ such that
 $$
 |w(x)|\leq C
\left| \int_{E_1\cup E_2\cup E_3}\dfrac{1}{|x-y|^{n-2}}f(y)\mathtt{d}y\right|
 \leq
 \left\{
\begin{array}{lllll}
C|x|^{2-\min\{n,\zeta\}}, & \zeta \not=n,\\
C|x|^{2-n}(\ln|x|), & \zeta =n.\\
\end{array}
\right.
 $$
 \end{proof}

\begin{proof}[Proof of Theorem \ref{exteriorLiouville}]

We may assume without loss of generality that $v$ is bounded from below, otherwise consider $-v$ instead.
  Let $w(x)$ be the bounded strong solution of (\ref{Dirichlet}) from Lemma \ref{existence}, then
  $$
  \widetilde v:=v-w-\inf_{\mathbb R^n}(v-w)\geq 0
  $$
  is a strong solution of  \eqref{Dirichlet} with $f\equiv0$.
 By interior regularity, $\widetilde{v}$ is a positive classical solution. By Theorem 2.2 in \cite{ExteriorLiouville},
 $$
 \widetilde{v}( x ) = \widetilde{v} _ { \infty } + O \left( |x|^{2-n}\right)\quad \text {as } | x | \rightarrow \infty,
$$
for some constant $\widetilde{v}_{\infty}$.
Then the result follows immediately from Lemma \ref{existence}.
 \end{proof}

\begin{remark}\label{ExteriorLiouville_NonPositive-remark}
  If $v$ is a classical solution of \eqref{Dirichlet} with  $|Dv(x)|=O(|x|^{-1})$ as $|x|\rightarrow\infty$ and  $f\in C^{0}(\mathbb{R}^n)$ satisfy
  (\ref{decayoff_1}), then $v$ is bounded from at least one side.  The proof is similar to $f\equiv 0$ case, which can be found in  Corollary 2.1 of \cite{ExteriorLiouville}.
\end{remark}

\subsection{Proof of Theorem \ref{Thm-firstExpansion}}\label{sec-Pf-1.1}

Let $u \in C^{2}\left(\mathbb{R}^{n}\right)$ be a classical solution of \eqref{Equ-exterior}, where $f$ satisfies \eqref{Low-Regular-Condition} for some $\zeta>2,m\geq 2$ and either of cases \eqref{case-MA}-\eqref{case-inverse} holds.
By extension and interior estimates, we may assume that $u\in W^{4,p}_{loc}(\mathbb R^n)$ for some $p>n$.
By Theorem \ref{thm-sec2}, Hessian matrix $D^2u$ have finite $C^{\alpha}$ norm on $\mathbb R^n$ and converge to some $A\in\mathtt{Sym}(n)$ at a H\"older speed as in \eqref{Result_LimitofHessian}.

Let $v : = u ( x ) - \frac { 1 } { 2 } x ^ { T } A x$. Applying Newton-Leibnitz formula between $$
F_{\tau}\left(\lambda\left(D^{2} v+A\right)\right)=f(x)\quad\text{and}
\quad F_{\tau}(\lambda(A))=f(\infty),
$$
we have
\begin{equation}\label{linearized-equation-3}
 \overline{a_{i j}}(x) D_{i j} v:=\int_{0}^{1} D_{M_{i j}} F_{\tau}\left(\lambda(t D^{2} v+A)\right) \mathrm{d} t \cdot D_{i j} v
 =f(x)-f(\infty)=:
\overline{f}(x)
\end{equation}
 For any $e\in\partial B_1$, by the concavity of operator $F$, the partial derivatives $v_e:=D_ev$ and $v_{ee}:=D^2_ev$ are strong solutions of
\begin{equation}\label{linearized-equation-1}
  \widehat{a_{ij}}(x)D_{ij}v_e:=D_{M_{i j}} F_{\tau} \left(\lambda( D^{2} v+A)\right) D_{i j} v_{e} =f_{e}(x),
\end{equation}
and
\begin{equation}\label{linearized-equation-2}
\widehat{a_{ij}}(x)D_{i j} v_{e e} \geq f_{e e}(x).
\end{equation}
By Theorem \ref{thm-sec2}, there exist $\epsilon>0$ and $C>0$ such that $$
\left| \overline { a _ { i j } } ( x ) - D_{M_{ij}}F_{\tau} (\lambda( A) ) \right| +\left| \widehat { a _ { i j } } ( x ) - D_{M_{ij}}F_{\tau}(\lambda(A)) \right| \leq \frac { C } { | x | ^ { \epsilon} }.
$$
By  condition (\ref{Low-Regular-Condition}) and constructing barrier functions  for \eqref{linearized-equation-2}, there exists $C>0$ such that for all $x\in\mathbb R^n$,
$$
v _ { e e } ( x ) \leq
\left\{
\begin{array}{llll}
C | x | ^ {2-\min\{n,\zeta+2\}}, & \zeta\not=n-2,\\
C|x|^{2-n}(\ln|x|), & \zeta=n-2.\\
\end{array}
\right.
$$
By the arbitrariness of $e$,
\begin{equation*}
\lambda_{\max }\left(D^{2} v\right)(x) \leq \left\{
\begin{array}{llll}
C | x | ^ {2-\min\{n,\zeta+2\}}, & \zeta\not=n-2,\\
C|x|^{2-n}(\ln|x|), & \zeta=n-2.\\
\end{array}
\right.
\end{equation*}
By \eqref{Low-Regular-Condition} and the ellipticity of equation (\ref{linearized-equation-3}),
\begin{equation*}
\lambda_{\min }\left(D^{2} v\right)(x) \geq-C \lambda_{\max }\left(D^{2} v\right)-C|\overline{f}(x)| \geq
\left\{
\begin{array}{llll}
-C | x | ^ {2-\min\{n,\zeta+2\}}, & \zeta\not=n-2,\\
-C|x|^{2-n}(\ln|x|), & \zeta=n-2.\\
\end{array}
\right.
\end{equation*}
Hence $$
\left| D ^ { 2 } v ( x ) \right| \leq
\left\{
\begin{array}{llll}
C | x | ^ {2-\min\{n,\zeta+2\}}, & \zeta\not=n-2,\\
C|x|^{2-n}(\ln|x|), & \zeta=n-2.\\
\end{array}
\right.$$

By Theorem \ref{thm-sec2}, the coefficients $\overline{a_{ij}},\ \widehat{a_{ij}}$ has bounded $C^{\alpha}$ norm on exterior domain. Since $\zeta>2$, applying  Remark \ref{ExteriorLiouville_NonPositive-remark} to equation \eqref{linearized-equation-1}, for any $e\in\partial B_1$, $v_e(x)$ is bounded from one side and   there exists $b _ { e } \in \mathbb { R }$ such that
\begin{equation}\label{capture1-order}  v _ { e } ( x ) = b_ { e } +
\left\{
\begin{array}{llll}
O \left( | x | ^ { 2 - \min\{n,\zeta+1\}} \right), & \zeta\not=n-1,\\
O \left( | x | ^ { 2 - n}(\ln|x|) \right), & \zeta=n-1,\\
\end{array}
\right.
\quad\text{as } | x | \rightarrow \infty.
\end{equation}
Picking $e$ as $n$ unit coordinate vectors of $\mathbb R^n$, we found $b\in\mathbb R^n$ from \eqref{capture1-order} and let
$$\overline { v } ( x ): = v ( x ) - b x = u ( x ) - \left( \frac { 1 } { 2 } x ^ { T } A x + b x \right).$$
By (\ref{capture1-order}),$$
|D\overline{v}(x)|=|(\partial_{x_1}v-b_1,\cdots,\partial_{x_n}v-b_n)|=
\left\{
\begin{array}{llll}
O \left( | x | ^ { 2 - \min\{n,\zeta+1\}} \right), & \zeta\not=n-1,\\
O \left( | x | ^ { 2 - n}(\ln|x|) \right), & \zeta=n-1,\\
\end{array}
\right.
$$
as $|x|\rightarrow\infty$.
By \eqref{linearized-equation-3},
\begin{equation*}
  \overline{a_{ij}}(x)D_{ij}\overline{v}=\overline{a_{ij}}(x)
  D_{ij}v=\overline{f}(x).
\end{equation*}
By the arguments above again, there exists $c\in\mathbb R$ such that
\begin{equation*}
   \overline{v}(x)=c+
   \left\{
   \begin{array}{llll}
   O(|x|^{2-\min\{n,\zeta\}}), & \zeta\not=n,\\
   O\left(|x|^{2-n}(\ln |x|)\right), & \zeta=n,\\
   \end{array}
   \right.
   \quad\text{as }|x|\rightarrow\infty.
\end{equation*}
Notice that here we used $\zeta>2$ for $|D\bar v|=O(|x|^{-1})$ and $\overline f=O(|x|^{-\zeta})$ at infinity.
Let
$
  Q(x):=\frac{1}{2}x^TAx+bx+c.
$ Then
$$
  |u-Q|=|\overline{v}-c|=
  \left\{
  \begin{array}{llll}
    O(|x|^{2-\min\{n,\zeta\}}), & \zeta\not=n,\\
    O(|x|^{2-n}(\ln|x|)), & \zeta=n,\\
  \end{array}
  \right.
  \text{ as }|x|\rightarrow\infty.
$$

Finally, we give the estimates of derivatives of $u$.
For  $|x|\geq 1$, let
\begin{equation*}
E(y)=\left(\frac{2}{|x|}\right)^{2}(u-Q)\left(x+\frac{|x|}{2} y\right).
\end{equation*}
Then by Newton-Leibnitz formula,
\begin{equation*}
\underline{a^{i j}}(y) D_{ij}E(y)=F_{\tau}\left(\lambda(A+D^{2} E(y))\right)-F_{\tau}(\lambda(A))=f(x+\frac{|x|}{2}y)-f(\infty)=:\underline f(y)\quad\text{in }B_1,
\end{equation*}
where
\begin{equation*}
\underline{a^{i j}}(y)=\int_{0}^{1} D_{M_{i j}}F_{\tau}\left(\lambda(A+t D^{2} E(y))\right) \mathtt{d}t.
\end{equation*}
By the Evans-Krylov estimate and interior Schauder estimate (see for instance Chap.8 of \cite{FullyNonlinear} and Chap.6 of \cite{GT}), for all $0<\alpha<1$,
 we have
$$
\begin{array}{llll}
||E||_{C^{2,\alpha}(\overline{B_{\frac{1}{2}}})}&\leq & C(||E||_{C^0(\overline{B_1})}+||\underline f||_{C^{\alpha}(\overline{B_2})})\\
&\leq & C(||E||_{C^0(\overline{B_1})}+||\underline f||_{C^{1}(\overline{B_2})})\\
&=&
  \left\{
  \begin{array}{llll}
    O(|x|^{-\min\{n,\zeta\}}), & \zeta\not=n,\\
    O(|x|^{-n}(\ln|x|)), & \zeta=n,\\
  \end{array}
  \right.
  \text{ as }|x|\rightarrow\infty.
\end{array}
$$
By taking further derivatives and iterate, we have for all $k\leq m+1$,
$$
\begin{array}{lllll}
\left(\frac{|x|}{2}\right)^{k-2}\left|D^k(u-Q)(x)\right|&=&|D^kE(0)|\\
&\leq & C_k(||E||_{C^0(\overline{B_1})}+||\underline f||_{C^{k-2,\alpha}(\overline{B_1})})\\
&\leq & C_k(||E||_{C^0(\overline{B_1})}+||\underline f||_{C^{k-1}(\overline{B_1})})\\
&=&  \left\{
  \begin{array}{llll}
    O(|x|^{-\min\{n,\zeta\}}), & \zeta\not=n,\\
    O(|x|^{-n}(\ln|x|)), & \zeta=n,\\
  \end{array}
  \right.
  \text{ as }|x|\rightarrow\infty.
\end{array}
$$
This finishes the proof of Theorem \ref{Thm-firstExpansion}.

\section{Proof of Theorem \ref{Thm-secondExpansion}}

In this  section, we consider asymptotic expansion at infinity for classical solutions of \eqref{Equ-exterior}. Assume that $u,f$ are as in Theorem \ref{Thm-firstExpansion}. Let $\overline{a_{ij}}, \overline f$ and $v$ be as in \eqref{linearized-equation-3} and subsection \ref{sec-Pf-1.1} respectively.

In the following, we only need to focus on $\zeta>n$ case as explained in Remark \ref{thm-radial}.
It follows from \eqref{equ-asym-Behavior} in Theorem \ref{Thm-firstExpansion},
\begin{equation*}
\left|\overline{a_{i j}}(x)-D_{M_{i j}} F_{\tau}(\lambda(A))\right| \leq C\left|D^{2} v(x)\right|=O_{m-1}\left(|x|^{-n}\right)
\end{equation*}
and hence
$$
\begin{array}{llll}
D_{M_{i j}} F_{\tau}(\lambda(A))D_{ij}v &= & \overline f-(\overline{a_{ij}}(x)-D_{M_{i j}} F_{\tau}(\lambda(A)))D_{ij}v=:g(x)\\
&=&O_m(|x|^{-\zeta})+ O_{m-1}\left(|x|^{-2n}\right) \\
&=& O_{m-1}(|x|^{-\min\{2n,\zeta\}})
\end{array}
$$
by \eqref{Low-Regular-Condition}
as $|x|\rightarrow\infty$.

Let $$Q:=
[D_{M_{i j}} F_{\tau}(\lambda(A))]^{\frac{1}{2}}\quad\text{and}\quad \widetilde v(x):=v(Qx).
$$
Then
\begin{equation}\label{temp-1}
\Delta \widetilde v(x)=g(Qx)=:\widetilde g(x)\quad\text{in } \mathbb R^n.
\end{equation}
By a direct computation,
$$
\widetilde v=O_{m+1}(|x|^{2-n})\quad\text{and}\quad \widetilde g=O_{m-1}\left(|x|^{-\min\{2 n,\zeta\}}\right).
$$

Let $\Delta_{\mathbb{S}^{n-1}}$ be the Laplace-Beltrami operator on unit sphere $\mathbb{S}^{n-1}\subset\mathbb{R}^n$ and
$$
\Lambda_0=0,~\Lambda_1=n-1,~\Lambda_2=2n,~\cdots,~\Lambda_k=k(k+n-2),~\cdots,
$$
be the sequence of eigenvalues of $-\Delta_{\mathbb S^{n-1}}$ with eigenfunctions
\begin{equation*}Y_1^{(0)}=1,~Y_{1}^{(1)}(\theta),~Y_{2}^{(1)}(\theta),~\cdots,~ Y_{n}^{(1)}(\theta),~\cdots,~Y_{1}^{(k)}(\theta),~\cdots,~Y_{m_k}^{(k)}(\theta),~\cdots
\end{equation*}
i.e.,
$$
-\Delta_{\mathbb{S}^{n-1}}Y_m^{(k)}(\theta)=\Lambda_kY_m^{(k)}(\theta),\quad\forall~
m=1,2,\cdots,m_k.
$$

By Lemmas 3.1 and 3.2 of \cite{bao-liu-2020}, there exists a solution $\widetilde v_{\widetilde g}$ of $\Delta \widetilde v_{\widetilde g}=\widetilde g$ in $\mathbb R^n\setminus\overline{B_1}$ with
\begin{equation*}
\widetilde v_{\widetilde g}=\left\{\begin{array}{ll}
O_{m}\left(|x|^{2-\min\{2 n,\zeta\}}\right), & \min\{2 n,\zeta\}-n \notin \mathbb{N}, \\
O_{m}\left(|x|^{2-\min\{2 n,\zeta\}}(\ln |x|)\right), & \min\{2 n,\zeta\}-n \in \mathbb{N}.
\end{array}\right.
\end{equation*}
Thus
$\overline v(x):=\widetilde v-\widetilde v_{\widetilde g}$ is harmonic on $\mathbb R^n\setminus\overline{B_1}$ with $\overline v=O(|x|^{2-n})$ as $|x|\rightarrow\infty$.
By spherical harmonic expansions, there exist constants $C_{k,m}^{(1)}, C_{k,m}^{(2)}$ such that  $$
  \overline{v}=\sum_{k=0}^{\infty} \sum_{m=1}^{m_{k}} C_{k,m}^{(1)}Y_{m}^{(k)}(\theta) |x|^{k} +\sum_{k=0}^{\infty} \sum_{m=1}^{m_{k}} C_{k,m}^{(2)} Y_{m}^{(k)}(\theta) |x|^{2-n-k}.
  $$
  By the vanishing speed of $\overline v$, we have $C_{k,m}^{(1)}=0$ for all $k,m$. Thus similar to the proof of Lemma 3.3 in \cite{bao-liu-2020}, there exist constants $c_{k,m}$ with $k\in\mathbb N$, $m=1,\cdots,m_k$ such that
\begin{equation*}
\widetilde v=
\left\{
\begin{array}{llll}
\displaystyle \sum_{k=0}^{[\zeta]-n} \sum_{m=1}^{m_{k}} c_{k, m}Y_{m}^{(k)}(\theta)|x|^{2-n-k} +
O_{m}\left(|x|^{2-\zeta}\right), & n<\zeta<2n,~\zeta\not\in\mathbb N,\\
\displaystyle \sum_{k=0}^{\zeta-n-1} \sum_{m=1}^{m_{k}} c_{k, m} Y_{m}^{(k)}(\theta)|x|^{2-n-k}+
O_{m}\left(|x|^{2-\zeta}(\ln |x|)\right), & n<\zeta< 2n,~\zeta\in\mathbb N,\\
\displaystyle \sum_{k=0}^{n-1} \sum_{m=1}^{m_{k}} c_{k, m}
Y_{m}^{(k)}(\theta)|x|^{2-n-k} +
O_{m}\left(|x|^{2-2n}(\ln |x|)\right), & 2n\leq \zeta.\\
\end{array}
\right.
\end{equation*}
By rotating backwards by $Q^{-1}$,
the results in Theorem \ref{Thm-secondExpansion} follow immediately.

\small

\bibliographystyle{plain}

\bibliography{AsymExpan}

\bigskip

\noindent Z.Liu \& J. Bao

\medskip

\noindent  School of Mathematical Sciences, Beijing Normal University\\
Laboratory of Mathematics and Complex Systems, Ministry of Education\\
Beijing 100875, China \\[1mm]
Email: \textsf{liuzixiao@mail.bnu.edu.cn}\\[1mm]
Email: \textsf{jgbao@bnu.edu.cn}





\end{document}